\newtheorem{proposition}{Proposition}[section]
\newtheorem{lemma}[proposition]{Lemma}
\newtheorem{theorem}[proposition]{Theorem}
\newtheorem{definition}[proposition]{Definition}
\def\F{{\EuScript F}}
\def\I{{\rm 1 \hskip-2.9truept l}}
\def\la{{\langle}}
\newcommand{\wh}{\widehat}
\newcommand{\wt}{\widetilde}
\def\R{{\mathbb R}}
\def\Q{{\mathbb Q}}
\def\a{\alpha}
\def\la{\lambda}
\def\La{\Lambda}
\def\de{\delta}
\def\Ga{\Gamma}
\def\ep{\varepsilon}
\def\epo{{\cal E}}
\def\kj{\overline{k}_j}
\def\O{\Omega}
\def\o{\omega}
\def\si{\sigma}
\def\th{\theta}
\def\vp{\varphi}
\def\z{\zeta}
\def\Re {{\rm Re}\,}
\def\Im {{\rm Im}\,}
\def\sI{\mathcal{I}}
\def\E{{\mathbb E}}
\def\P{{\mathbb P}}
\def\Z{{\mathbb Z}}
\def\N{{\mathbb N}}
\def\A{{\EuScript A}}
\def\K{{\EuScript K}_j (\wt{\rho}\,)}
\def\uK{\overline{{\EuScript K}}_j (\wt{\rho}\,)}
\def\M{{\EuScript M}}
\def\Ps {{\EuScript P}}
\def\ri{{\EuScript R}}
\def\supp{{\rm supp}}
\makeatletter \@addtoreset{equation}{section} \makeatother
\newcommand {\qed}%
{%
    {}\hfill
    {}\hfill
    {$\square $}%
    \vspace {0.3cm}%
    \pagebreak [2]%
    \par
}%
\newenvironment{proof}[1]{%
    \vspace{0.3cm}%
    \pagebreak [2]%
    \par%
    \noindent {\bf  Proof~#1\ }}{\qed}%
\newenvironment{remark}{%
    \vspace{0.3cm} \pagebreak [2]%
    \par%
    \refstepcounter{proposition}
    \noindent%
   {\bf Remark~\theproposition\  }}{\ }%
\def\Re {{\rm Re}\,}
\def\Im {{\rm Im}\,}
\def\E{{\mathbb E}}
\def\P{{\mathbb P}}
\def\Z{{\mathbb Z}}
\def\N{{\mathbb N}}
\def\M{{\EuScript M}}
\def\cqfd{$\square$}
\begin{document}

\title{An Optimal Uniform Modulus of Continuity for Harmonizable Fractional Stable Motion}

\author{Antoine Ayache\\
  Univ. Lille, \\
CNRS, UMR 8524 - Laboratoire Paul Painlev\'e,\\
F-59000 Lille, France\\  
E-mail: \texttt{antoine.ayache@univ-lille.fr}\\
\ 
\and
Yimin Xiao %\thanks{Research partially supported by NSF grant DMS-0706728.}
\\Department of Statistics and Probability
\\ Michigan State University  \\
East Lansing, MI 48824, U.S.A.\\
E-mail: \texttt{xiaoy@msu.edu}\\}

\maketitle

\begin{abstract}
Non-Gaussian Harmonizable Fractional Stable Motion (HFSM) is a natural and important extension of the well-known 
Fractional Brownian Motion to the framework of heavy-tailed stable distributions. It was introduced several decades ago; 
however its properties are far from being completely understood. In our present paper we determine the optimal power
of the logarithmic factor in a uniform modulus of continuity for HFSM, which solves an open old problem. The keystone 
of our strategy consists in Abel transforms of the LePage series expansions of the random coefficients of the wavelets 
series representation of HFSM. Our methodology can be extended to more general harmonizable stable processes and fields.
\end{abstract}

\medskip

      {Running head}: Optimal modulus of continuity of HFSM  \\

      {\it AMS Subject Classification (MSC2020 database)}: 60G52, 60G17, 60G22.\\

{\it Key words:} Heavy-tailed stable distributions, LePage random series, wavelet random series, Abel transform, 
binomial random variables.

\section{Introduction and statements of the main results}
\label{sec:intro}
For any given constants $\a\in (0,2]$ and $H\in (0,1)$, the Harmonizable Fractional Stable Motion (HFSM) is 
the continuous real-valued 
symmetric $\a$-stable (S$\a$S) stochastic process $\{X(t), t\in\R\}$ defined as 
\begin{equation}
\label{eq:df-hfsm}
X(t):=\Re\bigg(\int_{\R}\frac{e^{it\xi}-1}{|\xi|^{H+1/\a}}\, d\wt{M}_\a(\xi)\bigg),
\end{equation} 
where $\widetilde{M}_\alpha$ is  a complex-valued rotationally invariant $\a$-stable random measure with 
Lebesgue control measure. When $\alpha = 2$,  $\{X(t), t\in\R\}$ is a Fractional Brownian Motion, usually 
denoted by $\{B_H (t), t\in\R\}$, whose sample path regularity and many other properties have been 
extensively studied in the literature. When $0 < \alpha < 2$,  $\{X(t), t\in\R\}$ is one of the most important 
non-Gaussian self-similar S$\a$S processes with stationary increments; we refer to the well-known book 
of Samorodnitsky and Taqqu \cite{ST94} for a systemic account on these processes and many topics 
related with stable distributions.
%(another such process is linear fractional stable motion whose regularity properties are significantly different from those of HFSM);
Non-Gaussian HFSM was introduced about 35 years ago by Cambanis and Maejima in \cite{CM89}; 
however, its properties are far from being completely understood. Generally speaking, study of this process 
has been difficult for various reasons: lack of ergodicity, lack of finite second moment, and so on. 
Sample path behavior of HFSM such as the uniform modulus of continuity on a compact interval has been 
studied by  K\^ono and Maejima \cite{KonoMaejima91}, Xiao \cite{Xiao10}, Boutard \cite{Boutard}, Ayache 
and Boutard \cite{AB17}, Panigrahi  et al \cite{PRX21}, by using the LePage series representation, modified 
chaining argument, and wavelet methods, respectively. 
% modified the existing chaining argument and made it amenable 
%to heavy-tailed random fields. This technique uses estimates of the lower order moments of the maximum increments over the two consecutive 
%steps of the chain to obtain a uniform modulus of continuity for stable and other heavy-tailed random fields.
%Panigrahi  et al \cite{PRX21}
More specifically, in their pioneering article \cite{KonoMaejima91},  K\^ono and Maejima applied a LePage 
series representation for the process $\{X(t), t\in\R\}$ to show that for any given numbers $H\in (0,1)$, $\a\in (0,2)$,  
$\wt{\rho} >0$, and arbitrarily small $\de>0$, almost surely
\begin{equation}
\label{eq:koma}
\sup_{-\wt{\rho}\le t'<t''\le \wt{\rho}}\,\frac{\big | X(t')-X(t'')\big |}{\big |t'-t'' \big |^{H}\big (\log(1+|t'-t''|^{-1})\big)^{1/\a+1/2+\de}}<+\infty.
\end{equation}
In his quite recent PhD thesis \cite{Boutard},  Boutard mainly established directional regularity and irregularity results 
on general (anisotropic) harmonizable stable random fields with stationary increments. In particular, Theorem 5.2.1 in 
\cite{Boutard} proves the following partial inverse to (\ref{eq:koma}): for any given numbers $H\in (0,1)$, $\a\in (0,2)$,  
$u<v$,  and arbitrarily small $\de>0$, almost surely
\begin{equation}
\label{thm:abou2:eq1}
\sup_{u\le t'<t''\le v}\,\frac{\big | X(t')-X(t'')\big |}{\big |t'-t'' \big |^{H}\big (\log(1+|t'-t''|^{-1})\big)^{1/\a-\de}}=+\infty.
\end{equation}
It is a natural question to wonder whether or not the power of the logarithmic factor in the uniform modulus of 
continuity (\ref{eq:koma}) is optimal. One is tempted to believe that it is not optimal, since in the Gaussian case 
$\a=2$ it is known for a long time that the optimal power for this factor is $1/2$ and not $1$. Yet, when $\a\in (0,2)$, 
the question has remained open for many years. In the case $\a\in (0,1)$, a 
negative answer to it has been given in \cite{AB17} and \cite{Boutard}. Indeed,   \cite[Theorem 3.8]{AB17}  or 
\cite[Theorem 4.1.2]{Boutard}  for general harmonizable stable random fields with stationary increments 
implies the following:

\begin{theorem} (\cite{AB17, Boutard})
\label{thm:abou1}
If $H\in (0,1)$ and $\a\in (0,1)$, then for any positive numbers $\wt{\rho}$ and $\de$, %be an arbitrarily small positive real-number. one has 
almost surely
\begin{equation}
\label{thm:abou1:eq1}
\sup_{-\wt{\rho}\le t'<t''\le \wt{\rho}}\,\frac{\big | X(t')-X(t'')\big |}{\big |t'-t'' \big |^{H}\big (\log(1+|t'-t''|^{-1})\big)^{1/\a+\de}}<+\infty.
\end{equation}
\end{theorem}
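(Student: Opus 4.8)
The plan is to bypass the wavelet machinery of \cite{AB17, Boutard} and to argue directly from a LePage series representation, exploiting the one feature that is special to the range $\a<1$: absolute convergence. Fix a probability measure $\nu$ on $\R$ equivalent to Lebesgue measure, set $\psi=d\xi/d\nu$, and let $(\Gamma_l)_{l\ge1}$ be the arrival times of a standard Poisson process on $(0,\infty)$, $(\eta_l)_{l\ge1}$ i.i.d.\ uniform on the unit circle of $\C$, and $(V_l)_{l\ge1}$ i.i.d.\ with law $\nu$, the three sequences being independent. The LePage representation of the stable integral in \eqref{eq:df-hfsm} then gives, for a suitable nonzero constant $b_\a$,
\begin{equation*}
X(t)\ \overset{d}{=}\ \Re\Big(b_\a\sum_{l=1}^{\infty}\Gamma_l^{-1/\a}\,\eta_l\,\psi(V_l)^{1/\a}\,\frac{e^{itV_l}-1}{|V_l|^{H+1/\a}}\Big),\qquad t\in\R .
\end{equation*}
Since $H\in(0,1)$ and $\a<1$, the function $\xi\mapsto\min(2,2\wt\rho\,|\xi|)\,|\xi|^{-H-1/\a}$ lies in $L^\a(\R)$, so on one almost sure event this series converges absolutely and uniformly for $t\in[-\wt\rho,\wt\rho]$; its sum is then a continuous process there with the finite-dimensional distributions of $X$, and we work with this version. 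Bounding each summand of $X(t')-X(t'')$ in modulus and using $|e^{it'V_l}-e^{it''V_l}|\le\min(2,|t'-t''|\,|V_l|)$, we obtain, on one almost sure event and simultaneously for all $t',t''\in[-\wt\rho,\wt\rho]$,
\begin{equation*}
\big|X(t')-X(t'')\big|\ \le\ |b_\a|\,Q\big(|t'-t''|\big),\qquad Q(r):=\sum_{l=1}^{\infty}\Gamma_l^{-1/\a}\,\psi(V_l)^{1/\a}\,\min\big(2,r|V_l|\big)\,|V_l|^{-H-1/\a},
\end{equation*}
where $r\mapsto Q(r)$ is a nondecreasing, almost surely finite random function. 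It therefore suffices to prove that almost surely $Q(r)\le C_\o\,r^{H}\big(\log(1+1/r)\big)^{1/\a+\de}$ for all sufficiently small $r>0$.

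The key point is that, for fixed $r$, $Q(r)=\sum_{l\ge1}\Gamma_l^{-1/\a}c_l^{(r)}$ with $c_l^{(r)}:=\psi(V_l)^{1/\a}\min(2,r|V_l|)\,|V_l|^{-H-1/\a}\ge0$ independent and identically distributed and, by the change of variables $\int_\R\psi g\,d\nu=\int_\R g\,d\xi$ followed by $\xi\mapsto\xi/r$,
\begin{equation*}
\E\big[(c_1^{(r)})^\a\big]=\int_\R\min(2,r|\xi|)^\a\,|\xi|^{-\a H-1}\,d\xi=c_{\a,H}\,r^{\a H}<\infty .
\end{equation*}
Because $\a<1$, a series $\sum_l\Gamma_l^{-1/\a}c_l$ with i.i.d.\ $c_l\ge0$ and $\E[c_1^\a]<\infty$ is a positive $\a$-stable random variable whose scale is proportional to $(\E[c_1^\a])^{1/\a}$: indeed, by the mapping theorem for Poisson point processes, $\{(\Gamma_l^{-1/\a},c_l)\}_l$ is a Poisson process and $\sum_l\Gamma_l^{-1/\a}c_l$ is its image under $(x,c)\mapsto xc$, whose intensity is precisely the L\'evy measure $\a\,\E[c_1^\a]\,y^{-\a-1}\,dy$ of a positive $\a$-stable law. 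Hence $Q(r)$ is positive $\a$-stable with scale proportional to $r^{H}$, and there is a constant $C_1=C_1(\a,H,\wt\rho)$ with
\begin{equation*}
\P\big(Q(r)>\lambda\,r^{H}\big)\ \le\ C_1\,\lambda^{-\a}\qquad\text{for every }\lambda>0\text{ and every }r\in(0,2\wt\rho].
\end{equation*}

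Now specialise to $r=2^{-n}$ and $\lambda=n^{1/\a+\de}$: then $\P\big(Q(2^{-n})>n^{1/\a+\de}\,2^{-nH}\big)\le C_1\,n^{-1-\a\de}$, a summable series, so by the Borel--Cantelli lemma there is an almost surely finite $n_0(\o)$ such that $Q(2^{-n})\le n^{1/\a+\de}\,2^{-nH}$ for all $n\ge n_0$. For an arbitrary $r\in(0,2^{-n_0}]$ put $n=\lfloor\log_2(1/r)\rfloor\ge n_0$, so that $2^{-n-1}<r\le2^{-n}$; monotonicity of $Q$ then gives
\begin{equation*}
Q(r)\ \le\ Q(2^{-n})\ \le\ n^{1/\a+\de}\,2^{-nH}\ \le\ 2^{H}\,\big(\log_2(1/r)\big)^{1/\a+\de}\,r^{H},
\end{equation*}
and since $\log_2(1/r)\le\big(\log(1+1/r)\big)/\log 2$ this yields $Q(r)\le C_\o\,r^{H}\big(\log(1+1/r)\big)^{1/\a+\de}$ for all small $r$. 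Plugging this into $|X(t')-X(t'')|\le|b_\a|\,Q(|t'-t''|)$, and observing that the complementary range $|t'-t''|\ge2^{-n_0}$ is harmless because there the denominator in \eqref{thm:abou1:eq1} is bounded below by a positive $\o$-dependent constant while $X$ is bounded on the compact interval by continuity, we conclude that the supremum in \eqref{thm:abou1:eq1} is almost surely finite.

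The only technical points requiring real care are the rigorous justification of the absolute convergence and of the term-by-term domination uniformly in $(t',t'')$ — a routine application of the LePage convergence theorem to the $L^\a$ kernel $\min(2,2\wt\rho\,|\xi|)\,|\xi|^{-H-1/\a}$ — and the identification of $Q(r)$ with a one-parameter family of positive $\a$-stable variables, which is what pins down the exponent $1/\a$ of the logarithm. The genuine obstacle, and the reason this argument is confined to $\a<1$, is that when $\a\ge1$ one has $\sum_l\Gamma_l^{-1/\a}=+\infty$ almost surely, so no monotone dominating function $Q$ exists and the cancellations in the LePage series can no longer be discarded; retaining them in a controlled way is exactly what the Abel-transform device of the present paper is designed to accomplish. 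An alternative route, closer to \cite{AB17, Boutard}, would replace the LePage series by a random wavelet series and run the same dyadic Borel--Cantelli scheme on the supremum, at each dyadic scale, of the heavy-tailed wavelet coefficients.
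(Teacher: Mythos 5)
Your proof is correct, and it takes a genuinely different route from the one behind the cited result. The paper (following \cite{AB17,Boutard}) obtains Theorem \ref{thm:abou1} through the wavelet representation \eqref{eq:rep-hfsm}: for $\alpha\in(0,1)$ the LePage series of each coefficient $\Re(\varepsilon_{\alpha,j,k})$ is dominated, uniformly in $k$, by a single convergent positive random series, which gives the coefficient bound \eqref{ineq1f:ep}, and the modulus of continuity then follows from deterministic estimates on $\Psi_{\alpha,H}$ of the type carried out in Section \ref{sec:p-main}. You instead dominate the process itself: absolute and uniform convergence of the LePage series for $\alpha<1$ produces the nondecreasing random majorant $Q$, the computation $\E[(c_1^{(r)})^\alpha]=c_{\alpha,H}\,r^{\alpha H}$ together with the marking/mapping theorems identifies $Q(r)$ as a one-sided $\alpha$-stable variable of scale proportional to $r^{H}$, and a dyadic Borel--Cantelli argument converts the $\lambda^{-\alpha}$ tail into the exponent $1/\alpha+\delta$ --- this is exactly where the $1/\alpha$ comes from, and your monotone-domination step is what lets you avoid the chaining of \cite{KonoMaejima91}, which costs an extra $1/2$ in the exponent. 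What your approach buys is a short, wavelet-free, essentially self-contained proof on the interval $[-\wt{\rho},\wt{\rho}]$; what it gives up is robustness: as you say, when $\alpha\ge1$ the positivity trick dies with the divergence of $\sum_l\Gamma_l^{-1/\alpha}$, whereas the wavelet route localizes the difficulty into coefficient estimates, which is precisely what the present paper upgrades to $\alpha\in[1,2)$ via the Abel transform. If you write this up, make two small points explicit: the transfer from your LePage version to the process defined by \eqref{eq:df-hfsm} should be justified by observing that both versions are continuous and the supremum in \eqref{thm:abou1:eq1} can be computed over rational pairs, hence is determined by finite-dimensional distributions; and the uniform-on-the-circle marks represent the rotationally invariant measure only with the correct normalization $b_\alpha$ (compare Proposition \ref{prop:lepage}, where complex Gaussian marks are used instead).
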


The proof of this result in \cite{AB17} and \cite{Boutard} %implying Theorem \ref{thm:abou1}, 
makes an essential use of a random wavelet series representation for harmonizable stable fields with stationary increments 
(see for instance Section 2 in \cite{AB17}). 
In the particular case of the HFSM $\{X(t), t\in\R\}$ this representation can be expressed, for all $H\in (0,1)$ and $\a\in (0,2]$, as
\begin{equation}
\label{eq:rep-hfsm}
X(t)=\sum_{(j,k)\in\Z^2} 2^{-jH}\Re\big (\ep_{\a,j,k}\big)\big (\Psi_{\a,H}(2^j t-k)-\Psi_{\a,H}(-k)\big).
\end{equation}
In the above, for every  $(j,k)\in\Z^2$, $\ep_{\a,j,k}$ is a complex-valued $\a$-stable random variable defined as
 \begin{equation}
 \label{def:ep}
\ep_{\a,j,k}:=\int_{\R} 2^{-j/\a}e^{ik2^{-j}\xi}\,\widehat{\psi}(-2^{-j}\xi)\,d\wt{M}_\a (\xi),
\end{equation}
and $\Psi_{\a,H}$ is the deterministic real-valued function in the Schwartz class $S(\R)$ defined as 
 \begin{equation}
 \label{eq:df-psiah}
 \Psi_{\a,H}(y):=\int_{\R}e^{iy\xi}\frac{\widehat{\psi}(\xi)}{|\xi|^{H+1/\alpha}}\;d\xi,\quad \mbox{for all $y\in\R$},
 \end{equation}
where, $\wh{\psi}$ is the Fourier transform of a real-valued univariate Meyer mother wavelet (see e.g.
  \cite{LM86,meyer92,Daubechies}). Recall that $\psi$ and $\wh{\psi}$ 
 belong to $S(\R)$ and that $\wh{\psi}$ is compactly supported with support satisfying
 \begin{equation}
 \label{eq:psihat}
 \supp\,\wh{\psi}\subseteq\ri_0:=\Big\{\xi\in\R\,:\,\frac{2\pi}{3}\le |\xi|\le \frac{8\pi}{3}\Big\}.
 \end{equation} 
Notice that the random series in (\ref{eq:rep-hfsm}) is almost surely absolutely convergent for each fixed 
$t\in\R$ (see Proposition 2.10 in  \cite{AB17}), and that it is almost surely uniformly convergent in $t$ on 
each compact interval (see (2.53) and Propositions 2.15, 2.16 and 2.17 in \cite{AB17}, see also Theorem 
2.7 in \cite{AShX20}). We mention in passing that \cite{AShX20} has extended the representation 
(\ref{eq:rep-hfsm}) to the Harmonizable Fractional Stable Sheet which is indexed by $\R^N$ and whose 
increments are non-stationary. 

Thanks to stochastic integral representation (\ref{def:ep}), the complex-valued $\a$-stable process 
$\big\{\ep_{\a,j,k},\, (j,k)\in\Z^2\big\}$ can be expressed as a LePage random series as shown by Proposition 
4.3 in \cite{AB17} which will be recalled at the beginning of Section \ref{sec:key} of the present article. 
This useful representation allowed \cite{AB17} and \cite{Boutard} to establish the almost sure fundamental 
inequality (see for instance (2.35) in \cite{AB17}):
\begin{equation}
\label{ineq1f:ep}
\big |\Re (\ep_{\a, j,k})\big |\le C_{\a,\de} (1+|j|)^{1/\a+\de},\quad\mbox{for all $\a\in (0,1)$, $\de>0$ and $(j,k)\in\Z^2$,}
\end{equation}
which is one of the main ingredients in the proof of Theorem \ref{thm:abou1} via the wavelet representation (\ref{eq:rep-hfsm}). 
For establishing the fundamental inequality (\ref{ineq1f:ep}), in which $C_{\a,\de}$ denotes a positive finite random 
variable only depending on $\a$ and $\de$, \cite{AB17} and \cite{Boutard} made an essential use of the fact that, 
when $\a\in (0,1)$, the LePage series representation of $\ep_{\a,j,k}$ can be bounded, uniformly in $k\in\Z$, 
by an almost surely convergent positive random series. Unfortunately, this method can hardly be extended to the case 
$\a\in [1,2)$ since the latter random series is no longer convergent. The divergence of this series creates a 
major difficulty.  Because of it, only a weaker form of the inequality (\ref{ineq1f:ep}) was obtained 
in \cite{AB17} and \cite{Boutard} (see for instance (2.36) in \cite{AB17}). Namely, if $\a\in [1,2)$, then for any $\de>0$,  
almost surely
\begin{equation}
\label{ineq2f:ep}
\big |\Re (\ep_{\a, j,k})\big |\le C_{\a,\de} (1+|j|)^{1/\a+\de}\sqrt{\log \big (3+|j|+|k|\big)} \quad\mbox{for all %$\a\in [1,2)$, $\de>0$ 
$(j,k)\in\Z^2$.}
\end{equation}
In contrast with  (\ref{ineq1f:ep}),   the inequality (\ref{ineq2f:ep}) is not sharp enough for improving the uniform 
modulus of continuity (\ref{eq:koma}) of HFSM via the wavelet representation (\ref{eq:rep-hfsm}).

In our present article we will introduce a new strategy for dealing with the major difficulty  described above. 
The starting point of this new strategy is to apply an Abel transform to the LePage series representation of 
$\ep_{\a,j,k}$. Thanks to the new strategy we are able to prove the following crucial proposition.

\begin{proposition}
\label{prop:fund}
Let $\a\in [1,2)$, $\de>0$ and $\rho>0$ be arbitrary and fixed. There exist an event $\O^*$ of probability $1$ 
and a positive finite random variable $C$, which depends on $\a$, $\de$ and $\rho$, such that on $\O^*$, 
the inequality
\begin{equation}
\label{prop:fund:eq2}
\big |\Re (\ep_{\a, j,k})\big |\le C (1+j)^{1/\a+\de}
\end{equation}
holds for all $(j,k)\in\Z_+\times\Z$ satisfying 
\begin{equation}
\label{prop:fund:eq1}
|2^{-j}k|\le\rho.
\end{equation}
\end{proposition}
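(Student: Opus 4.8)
The plan is to start from the LePage series representation of $\ep_{\a,j,k}$ (recalled from Proposition 4.3 in \cite{AB17}), which for fixed $j$ expresses $\ep_{\a,j,k}$ as a random series whose summands involve a Poisson arrival sequence $\{\Ga_n\}$ raised to the power $-1/\a$, i.i.d.\ Rademacher or rotationally invariant random signs, and i.i.d.\ uniform random variables $\{\eta_n\}$ on $\ri_0$ through the factor $e^{ik2^{-j}\eta_n}\,\wh\psi(-\eta_n)$ (up to the $2^{-j/\a}$ and normalization constants). The key new idea, as announced in the introduction, is to freeze $j$, order the terms by increasing $\Ga_n$, and apply an Abel (summation-by-parts) transform: write the $k$-dependent exponential phases $e^{ik2^{-j}\eta_n}$ as the "slowly varying" factor and the partial sums $S_N = \sum_{n\le N}\Ga_n^{-1/\a}\cdot(\text{signs})\cdot\wh\psi(-\eta_n)$ as the other factor, so that after summation by parts the contribution is controlled by (i) the tail behavior of $\Ga_N^{-1/\a}$ and (ii) the supremum over $N$ of $|S_N|$ weighted appropriately. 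Because the $\eta_n$ live in the fixed compact annulus $\ri_0$ and $|2^{-j}k|\le\rho$, the phase increments $e^{i k 2^{-j}\eta_{n+1}} - e^{i k 2^{-j}\eta_n}$ are uniformly bounded (by $2$), which is exactly what makes the constraint (\ref{prop:fund:eq1}) usable and decouples the $k$-dependence from the heavy-tailed part.

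The next step is to obtain a uniform-in-$j$ almost sure bound on the relevant maximal functions of the partial sums $S_N^{(j)}$. Here the idea is that, after the Abel transform, the remaining random series no longer has the problematic divergent majorant that blocked the approach of \cite{AB17} for $\a\in[1,2)$; instead one has a martingale-type partial-sum process. I would apply a maximal inequality (Doob, or a Lévy-type inequality for sums of symmetric independent terms) together with the fact that $\sum_n \Ga_n^{-2/\a}<\infty$ a.s.\ when $\a<2$ to get, for each $j$, a bound of the form $\sup_N |S_N^{(j)}| \le C_{\a,\de}(1+j)^{1/\a+\de}$ on an event of probability $1$; the extra power $(1+j)^{1/\a+\de}$ over the "expected" $(1+j)^{1/\a}$ (coming from the $2^{-j/\a}$ scaling interacting with the $\Ga_n$ statistics across infinitely many $j$) is precisely the slack we are allowed. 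The Borel–Cantelli step that upgrades "for each fixed $j$" to "simultaneously for all $j\in\Z_+$ on one event $\O^*$" requires summable tail probabilities, and this is where the gain of the $\de$ exponent and the precise comparison with binomial/Poisson fluctuations enters — this matches the paper's stated keystone (Abel transforms of LePage series, binomial random variables).

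I expect the main obstacle to be exactly this last point: showing that the Abel-transformed series admits a bound that is uniform in $k$ (for all $k$ with $|2^{-j}k|\le\rho$) while still being summable in $j$ after optimizing constants. Naively, $S_N^{(j)}$ depends on $k$ through the signs being effectively reweighted by the phases, and one must argue that the worst case over the (exponentially many in $j$) admissible values of $k$ does not destroy the $(1+j)^{1/\a+\de}$ bound; the Abel transform converts this into controlling $\sum_n |\Ga_{n+1}^{-1/\a} - \Ga_n^{-1/\a}|\cdot|S_n^{(j)}|$ plus a boundary term, so one needs simultaneously a pathwise bound on the increments of $\Ga_n^{-1/\a}$ and a uniform bound on $|S_n^{(j)}|$ that is independent of $k$. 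Establishing the uniform-in-$k$ control of the partial sums $S_n^{(j)}$ — likely via a chaining argument over the phase parameter $k2^{-j}\in[-\rho,\rho]$, exploiting that $\wh\psi\in S(\R)$ is smooth and compactly supported so the summands are Lipschitz in the phase — is the technical heart of the argument, and the remaining steps (the deterministic Abel manipulation and the Borel–Cantelli union over $j$) are comparatively routine once that estimate is in hand.
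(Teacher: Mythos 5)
Your proposal gets the headline idea right (Abel transform of the LePage series) but applies the summation by parts in a direction that does not work. You propose to use the phases $e^{ik2^{-j}\z_n}$ as the ``slowly varying'' multiplier and to put $\Ga_n^{-1/\a}$ inside the partial sums $S_N$. Since the $\z_n$ are i.i.d., consecutive phase differences are of order $1$ --- your bound by $2$ is trivially true for every $k$ whatsoever, so the hypothesis $|2^{-j}k|\le\rho$ is not actually being used at this point --- while $S_N$ converges to a nonzero limit rather than to $0$; hence the transformed series is dominated by $\sum_N |e^{ik2^{-j}\z_N}-e^{ik2^{-j}\z_{N+1}}|\,|S_N|$, which diverges. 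The transform that works is the opposite one: take the monotone sequence $\Ga_m^{-1/\a}$ as the multiplier, whose increments satisfy $\Ga_m^{-1/\a}-\Ga_{m+1}^{-1/\a}\le C\,m^{-1-1/\a}\log(1+m)$ almost surely, and put the phase-carrying, Gaussian-weighted terms into the partial sums $S^{j,k}_{l,m}=\sum_{n\le m}\la^{j,k}_{l,n}g_{l,n}$; the whole game is then to control the growth of $|S^{j,k}_{l,m}|$ in $m$, uniformly in the admissible $(j,k)$.

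That control is exactly what is missing from your sketch, and it cannot be supplied by the ingredients you list. In the joint LePage representation (one common sequence $(\Ga_m,\z_m,g_m)$ for \emph{all} $(j,k)$, which is what an almost-sure simultaneous statement requires), the $\z_m$ are not uniform on $\ri_0$: they have the heavy-tailed density $\vp$ on $\R$, and it is the factor $\vp(\z_m)^{-1/\a}$ on the support of $\wh{\psi}(2^{-j}\cdot)$ that produces the $(1+j)^{(1+\ep)/\a}$ growth --- your proposal has no mechanism generating the $j$-dependence at all. The $k$-dependence enters only through a factor $\sqrt{\log(3+j+|k|+m)}$ in a Borel--Cantelli Gaussian maximal bound for $S^{j,k}_{l,m}$ (uniformity in $k$ is otherwise automatic because the phase has modulus $1$, so no chaining over $k2^{-j}\in[-\rho,\rho]$ is needed), and the hypothesis $|k|\le\rho 2^j$ is used precisely there to reduce it to $\sqrt{1+j}$. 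This extra $\sqrt{1+j}$ is then absorbed because the Bernoulli indicators $\I_{\ri_0}(2^{-j}\z_n)$ have success probability $p_j\le 6(1+j)^{-1-\ep}$, so the binomial counts $B^j_m$ of effective terms concentrate as $B^j_m\lesssim p_j m+m^{\eta}$ and $\sqrt{p_j(1+j)}=O(1)$; splitting $m$ according to whether $p_j m\ge m^{\eta}$ or not, and using two complementary bounds on $|S^{j,k}_{l,m}|$ (one linear in $B^j_m$, one proportional to $\sqrt{B^j_m\log(3+j+|k|+m)}$), gives summability against the increments $m^{-1-1/\a}\log(1+m)$. Your asserted per-$j$ bound $\sup_N|S^{(j)}_N|\le C(1+j)^{1/\a+\de}$ via Doob or L\'evy inequalities plus $\sum_n\Ga_n^{-2/\a}<\infty$ is not justified, and the step you yourself flag as the technical heart (uniformity in $k$ compatible with summability in $j$) is exactly the part for which no argument is given; as it stands the proposal does not prove the proposition.
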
 

%Then the fundamental 
Proposition \ref{prop:fund} allows us to establish the main theorem of this paper, 
which shows that the improved uniform modulus of continuity for the HFSM $\{X(t), t\in\R\}$ 
provided by Theorem \ref{thm:abou1} when $\a\in (0,1)$ is also valid when $\a\in [1,2)$.

\begin{theorem}\label{thm:main1}
For any  $H\in (0,1)$, $\a\in [1,2)$, $\de>0$ and $\wt{\rho}>0$, the inequality (\ref{thm:abou1:eq1}) holds
almost surely.
\end{theorem}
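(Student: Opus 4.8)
The plan is to derive Theorem~\ref{thm:main1} from Proposition~\ref{prop:fund} by plugging the improved coefficient bound into the wavelet random series representation (\ref{eq:rep-hfsm}), following the same scheme that \cite{AB17} and \cite{Boutard} used to obtain Theorem~\ref{thm:abou1} in the range $\a\in(0,1)$, but now with the bound (\ref{ineq1f:ep}) replaced by (\ref{prop:fund:eq2}). Fix $H\in(0,1)$, $\a\in[1,2)$, $\de>0$ and $\wt\rho>0$. Since the supremum in (\ref{thm:abou1:eq1}) is over $t',t''$ in the compact interval $[-\wt\rho,\wt\rho]$, we may choose $\rho>0$ large enough (e.g.\ $\rho=2\wt\rho+1$) so that, for every $j\in\Z_+$ and every $k\in\Z$ for which $2^{-j}k$ is within distance $2$ of the interval $[-\wt\rho,\wt\rho]$, condition (\ref{prop:fund:eq1}) holds. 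Work on the event $\O^*$ of Proposition~\ref{prop:fund}, and also intersect with the almost sure events guaranteeing absolute and locally uniform convergence of the series (\ref{eq:rep-hfsm}) (Propositions 2.10, 2.15, 2.16, 2.17 in \cite{AB17}); the resulting event still has probability one.

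Next I would split the difference $X(t')-X(t'')$ according to the representation (\ref{eq:rep-hfsm}) into the low-frequency part $j<0$, together with the nonnegative-frequency part, which is itself cut at the scale $j\approx \log_2\big(|t'-t''|^{-1}\big)$. For $j\ge 0$ the summand is $2^{-jH}\Re(\ep_{\a,j,k})\big(\Psi_{\a,H}(2^jt'-k)-\Psi_{\a,H}(2^jt''-k)\big)$ minus the analogous term with $t'$ replaced by $t''$; here one uses that $\Psi_{\a,H}\in S(\R)$, so $\Psi_{\a,H}$ and its derivative have rapid (say, faster than quadratic) decay, which makes the sum over $k$ at each fixed scale $j$ converge and localizes it to those $k$ with $2^{-j}k$ near $[-\wt\rho,\wt\rho]$ — precisely the range where (\ref{prop:fund:eq2}) applies, the contribution of the far-away $k$ being negligible by the Schwartz decay. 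For the high scales $2^j\ge |t'-t''|^{-1}$ one bounds each factor $|\Psi_{\a,H}(2^jt'-k)-\Psi_{\a,H}(-k)|$ and the corresponding one with $t''$ trivially by the sup of $|\Psi_{\a,H}|$ plus tail decay in $k$, producing a geometric sum in $2^{-jH}$ that is $O\big(|t'-t''|^{H}\big)$ up to the logarithmic factor coming from (\ref{prop:fund:eq2}); for the low scales $2^j\le |t'-t''|^{-1}$ (and for $j<0$) one instead uses the mean value estimate $|\Psi_{\a,H}(2^jt'-k)-\Psi_{\a,H}(2^jt''-k)|\le 2^j|t'-t''|\,\sup|\Psi_{\a,H}'|$ combined again with the $k$-tail decay, giving a sum $\sum_{0\le j\le \log_2|t'-t''|^{-1}} 2^{j(1-H)}|t'-t''|(1+j)^{1/\a+\de}$, which is again $O\big(|t'-t''|^{H}(\log(1+|t'-t''|^{-1}))^{1/\a+\de}\big)$ since the largest term dominates. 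Adding the two ranges, and absorbing the harmless finite contribution of the negative scales, yields the desired bound with the logarithmic exponent $1/\a+\de$, hence the finiteness of the supremum in (\ref{thm:abou1:eq1}).

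I expect the main obstacle to be bookkeeping rather than anything conceptually deep, since the substance is entirely in Proposition~\ref{prop:fund}. Concretely, the delicate point is handling the sum over $k\in\Z$ at each scale $j$: the coefficient bound (\ref{prop:fund:eq2}) is only available for $k$ with $|2^{-j}k|\le\rho$, so one must verify that the tail $\sum_{|2^{-j}k|>\rho}$ of $2^{-jH}|\Re(\ep_{\a,j,k})|\,|\Psi_{\a,H}(2^jt-k)-\Psi_{\a,H}(-k)|$ is controlled, which uses the Schwartz decay of $\Psi_{\a,H}$ against at most a polynomial growth of $|\Re(\ep_{\a,j,k})|$ in $|k|$ — this polynomial-in-$k$ control is exactly (\ref{ineq2f:ep}), which holds for all $(j,k)$, so the two estimates combine: (\ref{prop:fund:eq2}) on the relevant cone and (\ref{ineq2f:ep}) to kill the tail. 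A second routine subtlety is that the scale threshold $\log_2(|t'-t''|^{-1})$ need not be an integer and $t',t''$ range over a continuum, so one should either reduce the supremum to a countable dense set by continuity of $X$, or carry the estimate uniformly; both are standard. Once these points are dispatched, summing the geometric-type series over $j$ produces the stated modulus of continuity and completes the proof.
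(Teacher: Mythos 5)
Your proposal is correct and follows essentially the same route as the paper: decompose the series (\ref{eq:rep-hfsm}) into the range $|2^{-j}k|\le\rho$ where Proposition \ref{prop:fund} applies, the far-field range where (\ref{ineq2f:ep}) combined with the Schwartz decay of $\Psi_{\a,H}$ kills the extra $\sqrt{\log(3+|k|)}$ factor (giving in fact a Lipschitz bound), and then split the scale sum at $j_0\approx\log_2|t'-t''|^{-1}$, using the mean-value theorem for $j\le j_0$ and the trivial bound for $j>j_0$. The only cosmetic difference is the low-frequency part $j<0$, which the paper dispatches by citing the $C^\infty$-smoothness result (Proposition 2.15 in \cite{AB17}) rather than by a direct mean-value estimate as you suggest; both work.
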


For $H\in (0,1)$ and $\a\in (0,2)$, the uniform modulus of continuity in 
Theorems \ref{thm:abou1} and \ref{thm:main1} for the HFSM is more precise than the results in 
\cite{KonoMaejima91, Xiao10, PRX21}  on the uniform modulus of continuity for stable random 
fields including HFSM. We remark that the methods in the aforementioned references are different 
from the wavelet methods in \cite{AB17, Boutard} and in the present paper. Also, we point out that 
the methodology of the present article can be extended to harmonizable stable processes and 
fields which are more general than HFSM; this is what we intend to do in future works.

It follows from (\ref{thm:abou2:eq1}) that the uniform modulus of 
continuity for HFSM given by Theorems \ref{thm:abou1}  and \ref{thm:main1}  
 %when $(H,\a)\in (0,1)\times (1,2)$ 
is quasi-optimal. The following theorem improves (\ref{thm:abou2:eq1}) and shows that optimality of 
the uniform modulus of continuity for HFSM is fundamentally different from that for the Gaussian case 
$(\alpha = 2$) of the Fractional Brownian Motion $\{B_H (t), t\in\R\}$, where it is known that for any fixed 
real numbers $u<v$, almost surely,
\[
0<\sup_{u\le t'<t''\le v}\,\frac{\big | B_H(t')-B_H(t'')\big |}{\big |t'-t'' \big |^{H}\big (\log(1+|t'-t''|^{-1})\big)^{1/2}}<+\infty.
\]
%and
%\[
%\sup_{u\le t'<t''\le v}\,\frac{\big | B_H(t')-B_H(t'')\big |}{\big |t'-t'' \big |^{H}\big (\log(1+|t'-t''|^{-1})\big)^{1/2-\de}}=+\infty,\quad\mbox{for all $\de>0$.}
%\]

\begin{theorem}
\label{thm:main2}
For any given real numbers $H\in (0,1)$, $\a\in (0,2)$ and $u<v$, the equality 
(\ref{thm:abou2:eq1}) with  $\de=0$ holds almost surely:
\begin{equation}
\label{thm:abou2:eq1b}
\sup_{u\le t'<t''\le v}\,\frac{\big | X(t')-X(t'')\big |}{\big |t'-t'' \big |^{H}\big (\log(1+|t'-t''|^{-1})\big)^{1/\a}}=+\infty.
\end{equation}
\end{theorem}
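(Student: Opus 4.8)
The strategy is to produce, almost surely, a sequence of pairs $(t'_n, t''_n)$ in $[u,v]$ with $|t'_n - t''_n| \to 0$ along which the quotient in (1.14) blows up. The natural candidates come from the wavelet representation (1.7): fix a large scale $j$ and look at increments of $X$ over dyadic intervals of length $2^{-j}$ inside $[u,v]$. Writing $X(t) = \sum_{(j',k)} 2^{-j'H} \Re(\epsilon_{\alpha,j',k})\big(\Psi_{\alpha,H}(2^{j'}t - k) - \Psi_{\alpha,H}(-k)\big)$, I would isolate the contribution of the scale-$j$ terms to $X(t'') - X(t')$ and show that the remaining scales (both coarser $j' < j$ and finer $j' > j$) contribute a lower-order error, using the Schwartz-class decay of $\Psi_{\alpha,H}$ together with inequality (2.10) / (2.11) to control $|\Re(\epsilon_{\alpha,j',k})|$. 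The dominant term, for a well-chosen pair of points straddling a single wavelet bump at scale $j$, is of size roughly $2^{-jH} \max_k |\Re(\epsilon_{\alpha,j,k})|$, where the max is over the $O(2^j)$ indices $k$ with $2^{-j}k \in [u,v]$.

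The heart of the matter is therefore a lower bound on $\max_{k : 2^{-j}k \in [u,v]} |\Re(\epsilon_{\alpha,j,k})|$ that grows like $(1+j)^{1/\alpha}$ infinitely often in $j$ — i.e.\ the matching lower bound to Proposition 1.3. For this I would exploit the heavy tails of the stable variables $\epsilon_{\alpha,j,k}$: each $\Re(\epsilon_{\alpha,j,k})$ is (a real part of) an $\alpha$-stable variable, so $\mathbb{P}(|\Re(\epsilon_{\alpha,j,k})| > \lambda) \asymp c\,\lambda^{-\alpha}$ for large $\lambda$. Taking $\lambda = \lambda_j$ with $\lambda_j^{-\alpha} \asymp j^{-1}$, i.e.\ $\lambda_j \asymp j^{1/\alpha}$, the expected number of indices $k$ among the $\asymp 2^j$ admissible ones with $|\Re(\epsilon_{\alpha,j,k})| > \lambda_j$ is of order $2^j / j \to \infty$. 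A second-moment / Paley–Zygmund argument, or a direct Borel–Cantelli estimate, then shows that with probability tending to $1$ (and, after passing to a subsequence of scales $j_n$, almost surely infinitely often) at least one such index exists. One must be a little careful because the $\epsilon_{\alpha,j,k}$ for different $k$ at a fixed $j$ are not independent — they are jointly stable — but a LePage-series representation (as recalled at the start of Section 3) or an explicit computation of the dependence structure of $\{\epsilon_{\alpha,j,k}\}_k$ should give enough decorrelation (or a suitable Gaussian/conditional-Gaussian comparison after conditioning on the radial parts of the LePage series) to run the second-moment bound. This is the step I expect to be the main obstacle: quantifying the dependence among $\{\epsilon_{\alpha,j,k}\}_{k}$ well enough to guarantee, almost surely, that the maximum over admissible $k$ exceeds $c\,j^{1/\alpha}$ for infinitely many $j$.

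Once that lower bound is in hand, I would finish as follows. For each such $j = j_n$, pick an index $k_n$ with $2^{-j_n}k_n \in [u,v]$ and $|\Re(\epsilon_{\alpha,j_n,k_n})| \ge c\, j_n^{1/\alpha}$, and choose $t'_n, t''_n$ two points near $2^{-j_n}k_n$, at distance $\asymp 2^{-j_n}$ apart, chosen so that $\Psi_{\alpha,H}(2^{j_n}t''_n - k_n) - \Psi_{\alpha,H}(2^{j_n}t'_n - k_n)$ is bounded below by a positive absolute constant (possible since $\Psi_{\alpha,H}$ is a fixed non-constant Schwartz function). Then $|X(t''_n) - X(t'_n)| \ge c\, 2^{-j_n H} j_n^{1/\alpha} - (\text{error})$, while $|t''_n - t'_n|^H \big(\log(1 + |t''_n - t'_n|^{-1})\big)^{1/\alpha} \asymp 2^{-j_n H} (j_n \log 2)^{1/\alpha}$; the error term, controlled via the Schwartz decay of $\Psi_{\alpha,H}$ and (2.11), is $o\big(2^{-j_n H} j_n^{1/\alpha}\big)$. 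Hence the quotient in (1.14) stays bounded below by a positive constant along $(t'_n, t''_n)$, and since there are infinitely many $n$ the supremum is $+\infty$, which is exactly (1.15). I would also note that by localization (choosing the $k_n$ inside an arbitrarily small subinterval, or a standard zero–one law argument) the same blow-up holds on every subinterval, consistent with the stated form of the theorem.
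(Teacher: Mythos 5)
Your plan has a genuine gap at its core, and also a logical slip at the very end. The key step you need --- that $\max_{k:\,2^{-j}k\in[u,v]}|\Re(\ep_{\a,j,k})|$ exceeds $c\,j^{1/\a}$ infinitely often --- cannot be obtained by the exceedance-counting/second-moment argument you sketch, because the dependence of $\{\ep_{\a,j,k}\}_k$ at fixed $j$ is essentially total rather than a mild decorrelation. In the LePage representation (\ref{lepage:ep}) the index $k$ enters only through the unimodular phases $e^{ik2^{-j}\z_m}$, so all the $|\ep_{\a,j,k}|$, $k\in\Z$, are dominated by one and the same $k$-independent random series; an exceedance of level $\lambda_j\asymp j^{1/\a}$ is driven by a single large Poisson factor $\Ga_1^{-1/\a}$ landing in the annulus $2^j\ri_0$, and when it occurs it occurs for all $k$ simultaneously. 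Consequently $\P(\max_k|\Re(\ep_{\a,j,k})|>\lambda_j)$ is of order $\lambda_j^{-\a}\asymp j^{-1}$, not $2^j\lambda_j^{-\a}$, and the heuristic ``expected number of exceedances $\asymp 2^j/j\to\infty$'' is misleading: indeed Proposition \ref{prop:fund} (and (\ref{ineq1f:ep}) for $\a<1$) already shows the maximum over the $\asymp 2^j$ admissible $k$ is $O\big(j^{1/\a+\de}\big)$, so no gain from the number of indices is available. Moreover, even if you had $\max_k|\Re(\ep_{\a,j,k})|\ge c\,j^{1/\a}$ along a sequence $j_n$, your conclusion ``the quotient stays bounded below by a positive constant infinitely often, hence the supremum is $+\infty$'' is a non sequitur: it only yields that the supremum in (\ref{thm:abou2:eq1b}) is positive. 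To force it to be infinite you must beat the normalization $2^{-jH}j^{1/\a}$ by a factor tending to infinity (e.g.\ $(\log j)^{1/\a}$). A further difficulty is the ``error term'': the other scale-$j$ coefficients near $k_n$ are generically of the same order $2^{-jH}j^{1/\a+\de}$ as your main term, so isolating a single wavelet bump and declaring the rest $o(2^{-j_nH}j_n^{1/\a})$ is not justified by Schwartz decay plus (\ref{ineq2f:ep}).

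The paper circumvents all three issues by not working with individual coefficients at all. It tests $X$ against $\th(2^j\cdot-\kj)$, where $\wh\th$ is supported in $\{1/2\le|\xi|\le1\}$, obtaining functionals $W_j$ in (\ref{main2:e2})--(\ref{main2:e3}) that are exact S$\a$S stochastic integrals with scale $c_12^{-jH}$ and, crucially, \emph{independent in $j$} because their frequency supports are disjoint. The lower stable tail bound together with the divergence of $\sum_j 1/\big((j+1)\log(j+1)\big)$ and the second Borel--Cantelli lemma then give $\limsup_j 2^{jH}(j+1)^{-1/\a}|W_j|=+\infty$ a.s.\ --- this is exactly where the extra diverging factor $(\log j)^{1/\a}$ comes from. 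Finally, arguing by contradiction, a finite value $A$ of the supremum in (\ref{main2:e12bis}) would force $|\wt W_j|\le CA\,2^{-jH}(j+1)^{1/\a}$ (with the truncation error handled by the global growth estimate (\ref{main2:e9})), contradicting the limsup. If you want to salvage a coefficient-based route, you would need both an honest lower-tail/independence structure across well-separated scales $j$ (available since the annuli $2^j\ri_0$ are disjoint for $|j-j'|\ge2$) and a mechanism converting a large coefficient into a large increment despite same-order neighbors; the paper's duality-type argument (large $W_j$ plus finite modulus is impossible) is precisely the device that avoids this.
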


%Theorems \ref{thm:abou1} and \ref{thm:main2} leave two problems that are still open. 
%\begin{itemize}
%\item[(i)]\, The case of $\alpha = 1$ is left untreated by Theorems 1.1 and 1.3. 
%%\begin{conjecture}
%%\label{rem:open-que}
%%It is still an open problem to obtain an optimal uniform modulus of continuity for the HFSM when $\a=1$. 
%We conjecture that %the optimal one provided by Theorems \ref{thm:abou1} and \ref{thm:main2} for any 
%%$\a\in (0,2)  \setminus\{1\}$
%(\ref{thm:abou1:eq1})  remains valid when $\a=1$.
%%except in the particular case $\a=1$. %We intend to solve it except in the particular case $\a=1$.
%%\end{conjecture}
%\item[(ii)]\, Given (\ref{thm:abou1:eq1}) and (\ref{thm:abou2:eq1b}), it would be interesting to ask if there is 
%a  function $L: \R_+ \to \R_+$ that is slowly varying at the origin such that 
%\begin{equation}
%\label{thm:abou2:eq1c}
%0< \sup_{u\le t'<t''\le v}\,\frac{\big | X(t')-X(t'')\big |} { |t'-t'' \big |^{H}  L(|t'-t''|)} <+\infty.
%\end{equation}
%%function the exact uniform modulus of 
%%continuity for the HFSM exists. 
%If such a function $L$ does not exist, is there a criterion on $L$ (e.g., an integral test) that can decide when 
%the supremum in (\ref{thm:abou2:eq1c}) is 0 or $\infty$?
%\end{itemize}

Theorems \ref{thm:abou1} and \ref{thm:main2} leave the following open questions. %problem that is still open. 
\begin{itemize}
\item[(i)]\, Is there a  function $L: \R_+ \to \R_+$ that is slowly varying at the origin such that 
\begin{equation}
\label{thm:abou2:eq1c}
0< \sup_{u\le t'<t''\le v}\,\frac{\big | X(t')-X(t'')\big |} { |t'-t'' \big |^{H}  L(|t'-t''|)} <+\infty?
\end{equation}
%function the exact uniform modulus of 
%continuity for the HFSM exists. 
\item[(ii)]\, If the answer to Question (i) is negative, is there a criterion on $L$ (e.g., an integral test) that can decide 
when the supremum in (\ref{thm:abou2:eq1c}) is 0 or $\infty$?
\end{itemize}

The rest of our article is organized as follows. Section \ref{sec:key} is devoted to the proof of the 
fundamental Proposition \ref{prop:fund}, and Section \ref{sec:p-main} to those of Theorems \ref{thm:main1} 
and \ref{thm:main2}.

\section{Proof of Proposition \ref{prop:fund}} 
\label{sec:key} 

First, we give a represntation of the complex-valued $\a$-stable stochastic process $\big\{\ep_{\a,j,k},\, (j,k)\in\Z^2\big\}$, 
defined through (\ref{def:ep}),  as a LePage-type random series. The following proposition is a reformulated version 
of Proposition 4.3 in \cite{AB17} in our setting.

\begin{proposition}
\label{prop:lepage}
Let $\a \in (0,2)$ be a constant and let the positive finite constant $a_{\a}:=\Big(\int_{0}^{+\infty}x^{-\a}\sin x\,dx\Big)^{-1/\a}$. Then
 \begin{equation}
 \label{lepage:ep}
\Big\{\ep_{\a,j,k}\,,\,\, (j,k)\in\Z^2\Big\}\stackrel{\mbox{{\tiny $d$}}}{=}\bigg \{a_{\a}\sum_{m=1}^{+\infty} 
\Gamma_{m}^{-1/\a}\varphi (\z_m)^{-1/\a}\,2^{-j/\a}e^{ik2^{-j}\z_m}\wh{\psi}(-2^{-j}\z_m) g_m\,,\,\, (j,k)\in\Z^2\bigg\},
\end{equation}
where $\stackrel{\mbox{{\tiny $d$}}}{=}$ means equality %the equality between these two stochastic processes holds in the sense 
of all finite-dimensional distributions. 
Notice that, for all $(j,k)\in\Z^2$, the random series in the right-hand side of (\ref{lepage:ep}) converges almost surely, and that 
$(\Gamma_m)_{m\in\N}$, $(\z_m)_{m\in\N}$,  and $(g_m)_{m\in\N}$ are three mutually independent sequences of 
random variables, defined on the same probability space, which satisfy the following three properties.
\begin{itemize}
\item[(i)] $(\Gamma_m)_{m\in\N}$ is a sequence of Poisson arrival times with unit rate; in other words there is a 
sequence $(\epo_n)_{n\in\N}$ of independent and identically distributed exponential random variables with parameter 
equals to $1$ such that 
\begin{equation}
\label{eq:ga}
\Ga_m=\sum_{n=1}^m \epo_n\, \ \hbox{ for all }\, m\in\N.
\end{equation}
\item[(ii)] $(\z_m)_{m\in\N}$ is a sequence of real-valued independent, identically distributed absolutely continuous random 
variables with probability density function $\vp$ given by $\vp(0)=0$ and 
\begin{equation}
\label{eq:vp}
\vp(\xi)=4^{-1}\ep\, |\xi|^{-1}\big (1+\big| \log |\xi|\big|\big)^{-1-\ep}\,\quad \mbox{for $\xi\in\R\setminus\{0\}$},
\end{equation} 
where $\ep$ is an arbitrarily small positive number; in fact $\ep$ will later be chosen in a convenient way 
related to $\de$ which appears in (\ref{prop:fund:eq2}).
\item[(iii)] $(g_m)_{m\in\N}$ is a sequence of complex-valued, independent, identically distributed, centered Gaussian 
random variables such that $\E\big (|\Re(g_m)|^\a\big)=1$.
\end{itemize}
\end{proposition}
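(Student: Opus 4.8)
The plan is to obtain (\ref{lepage:ep}) as a specialization of the general LePage series representation of symmetric $\a$-stable stochastic integrals, applied simultaneously to the whole family of integrands indexed by $(j,k)\in\Z^2$. Recall that for a complex rotationally invariant S$\a$S random measure $\wt{M}_\a$ on $\R$ with Lebesgue control measure, and for any probability density $\vp$ on $\R$ that is strictly positive on the supports of the relevant integrands, one has, jointly in $f$ ranging over a family of functions in $L^\a(\R)$,
\[
\int_\R f\,d\wt{M}_\a \stackrel{\mbox{{\tiny $d$}}}{=} a_\a\sum_{m=1}^{+\infty}\Ga_m^{-1/\a}\,\vp(\z_m)^{-1/\a}\, f(\z_m)\,g_m,
\]
where $(\Ga_m)$, $(\z_m)$ and $(g_m)$ are as in (i)--(iii) and mutually independent. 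This is exactly the content of Proposition~4.3 in \cite{AB17} (see also \cite[Section~3.10]{ST94}); in it the factor $\vp(\z_m)^{-1/\a}$ is the Radon--Nikodym correction converting sampling according to $\vp$ back to the Lebesgue control measure, the complex Gaussian weights $g_m$ encode the rotationally invariant phases of the jumps of $\wt{M}_\a$, and the constant $a_\a$ together with the normalization $\E\big(|\Re(g_m)|^\a\big)=1$ fixes the overall scale. First I would read off from (\ref{def:ep}) the integrand $f_{j,k}(\xi):=2^{-j/\a}e^{ik2^{-j}\xi}\wh{\psi}(-2^{-j}\xi)$, for which the displayed representation yields precisely the right-hand side of (\ref{lepage:ep}).

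To justify the application I would verify the three standing hypotheses. That $\vp$ is a genuine probability density follows from a one-line computation: by the symmetry $\vp(-\xi)=\vp(\xi)$ and the substitution $u=\log|\xi|$ one gets $\int_\R\vp(\xi)\,d\xi=\frac{\ep}{2}\int_{-\infty}^{+\infty}(1+|u|)^{-1-\ep}\,du=1$. That each $f_{j,k}$ lies in $L^\a(\R)$ is immediate, since $\wh{\psi}\in S(\R)$ is bounded with compact support, so $f_{j,k}$ is bounded and supported in the bounded set $2^j\ri_0$; this simultaneously guarantees that $\ep_{\a,j,k}$ is a well-defined S$\a$S integral and, by the general theory, that the series on the right converges almost surely for each $(j,k)$. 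Finally, since $0\notin\ri_0$ by (\ref{eq:psihat}) whereas $\vp>0$ on all of $\R\setminus\{0\}$, the density $\vp$ is strictly positive on $\supp f_{j,k}\subset 2^j\ri_0$ for every $(j,k)$, so the reweighting by $\vp(\z_m)^{-1/\a}$ is harmless and the convention $\vp(0)=0$ plays no role.

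The heart of the matter is that (\ref{lepage:ep}) asserts equality of \emph{all finite-dimensional distributions}, not merely of marginals, so I would stress that a single realization of the configuration $\{(\Ga_m,\z_m,g_m)\}_m$ represents $\wt{M}_\a$ itself, and hence every integral $\ep_{\a,j,k}$ at once. Concretely, for any finite collection $(j_1,k_1),\dots,(j_\ell,k_\ell)$ and coefficients $c_1,\dots,c_\ell$, linearity of the stochastic integral gives $\sum_r c_r\ep_{\a,j_r,k_r}=\int_\R\big(\sum_r c_r f_{j_r,k_r}\big)\,d\wt{M}_\a$, and applying the displayed representation to the single integrand $\sum_r c_r f_{j_r,k_r}\in L^\a(\R)$ shows that the two sides of (\ref{lepage:ep}) share the same joint characteristic functions. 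The main obstacle, and the point requiring most care, is thus the complex/rotationally invariant bookkeeping: one must check that the product $\vp(\z_m)^{-1/\a} f_{j,k}(\z_m)\,g_m$ correctly reproduces both the modulus of the contribution of $\wt{M}_\a$ (through $|f_{j,k}|$ and the $\Ga_m^{-1/\a}$ tail) and its phase (through the isotropic $g_m$ combined with the oscillatory factor $e^{ik2^{-j}\z_m}$), and that $a_\a=\big(\int_0^{+\infty}x^{-\a}\sin x\,dx\big)^{-1/\a}$ is indeed the normalizing constant produced by the LePage construction under the convention $\E\big(|\Re(g_m)|^\a\big)=1$. Once this scale is matched for a single $f$, the joint statement follows automatically from the argument above.
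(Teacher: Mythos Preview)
Your approach is correct and matches the paper's treatment: the paper does not prove this proposition at all but simply states it as ``a reformulated version of Proposition~4.3 in \cite{AB17} in our setting,'' which is exactly the result you invoke. Your additional verification that $\vp$ is a genuine density, that each $f_{j,k}\in L^\a(\R)$ with support in $2^j\ri_0$ away from the origin, and that the joint equality of finite-dimensional distributions follows by linearity, is a faithful and correct unpacking of why Proposition~4.3 of \cite{AB17} applies here.
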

 
% \begin{remark}
% \label{rem:as-pag}
 From now on, we will not distinguish the two stochastic processes in the left and right hand sides of the equality (\ref{lepage:ep}).
 % will always be identified.
% \end{remark}
\medskip 
 
 %Our present goal is to show the following fundamental proposition whose proof will be divided into %require 
 %several steps.

\begin{definition}
\label{def:riparts}
For each $m\in\N$, set
\begin{equation}
\label{def:riparts:eq0}
g_{0,m}:=\Re(g_m)\quad\mbox{and}\quad g_{1,m}:=\Im(g_m).
\end{equation}
Moreover, for any $(j,k)\in\Z_+\times\Z$, let
\begin{eqnarray}
\label{def:riparts:eq0bis}
&&\la_{0,m}^{j,k}:=\Re \Big(\varphi
(\z_m)^{-1/\a}\,2^{-j/\a} e^{ik2^{-j}\z_m}\wh{\psi}(-2^{-j}\z_m)\Big)\nonumber\\
&&\mbox{and}\\
&&\la_{1,m}^{j,k}:=\Im \Big(\varphi
(\z_m)^{-1/\a}\,2^{-j/\a} e^{ik2^{-j}\z_m}\wh{\psi}(-2^{-j}\z_m)\Big).\nonumber
\end{eqnarray}
Then, %one clearly has that
\begin{equation}
\label{def:riparts:eq1}
\Re \Big(\varphi
(\z_m)^{-1/\a}\,2^{-j/\a} e^{ik2^{-j}\z_m}\wh{\psi}(-2^{-j}\z_m)g_m\Big)=\la_{0,m}^{j,k}g_{0,m}-\la_{1,m}^{j,k}g_{1,m}\,,
\end{equation}
and consequently (see (\ref{lepage:ep}))  
\begin{equation}
\label{def:riparts:eq2}
\Re(\ep_{\a,j,k})=a_{\a}\sum_{m=1}^{+\infty} \Ga_m^{-1/\a}\big (\la_{0,m}^{j,k}g_{0,m}-\la_{1,m}^{j,k}g_{1,m}\big).
\end{equation}
Notice that the random series in (\ref{def:riparts:eq2}) is almost surely convergent, since the random series in (\ref{lepage:ep}) 
satisfies this property.
%where
%\begin{equation}
%\label{def:riparts:eq3}
%\chi_{j,k}^{l}:=\sum_{m=1}^{+\infty} \Ga_m^{-1/\a}\la_{l,m}^{j,k}g_{l,m}\,,\quad\mbox{for all $l\in\{0,1\}$.}
%\end{equation}
\end{definition}

\begin{definition}
\label{def:Smjk}
For any $l\in\{0,1\}$ and $(j,k)\in\Z_+\times\Z$, we set $S_{l,0}^{j,k}:=0$, and, for all $m\in\N$,
\begin{equation}
\label{def:Smjk:eq1}
S_{l,m}^{j,k}:=\sum_{n=1}^m \la_{l,n}^{j,k}g_{l,n}\,.
\end{equation}
\end{definition}

\begin{definition}
\label{def:ber-bin}
Let $\ri_0:=\big\{\xi\in\R\,:\, 2\pi/3\le |\xi|\le 8\pi/3\big\}$ be as in (\ref{eq:psihat}). For every $j\in\Z_+$,  
denote by $(\beta_n^j)_{n\in\N}$ the sequence of the independent and identically distributed 
Bernoulli random variables defined as 
\begin{equation}
\label{def:ber-bin:eq1}
\beta_n^j:=\I_{\ri_0} (2^{-j} \z_n)\,,
\end{equation}
and  by $(B_m^j)_{m\in\N}$ the sequence of the binomial random variables defined as
\begin{equation}
\label{def:ber-bin:eq2}
B_m^j:=\sum_{n=1}^m \beta_n^j\,.
\end{equation}
\end{definition}

\begin{lemma}
\label{lem:bou-lamjk}
Let $\mu_{\a,\ep}$ be the positive finite constant defined as
\begin{equation}
\label{lem:bou-lamjk:eq1}
\mu_{\a,\ep}:=\sup_{\xi\in\ri_0} \vp (\xi)^{-1/\a}|\wh{\psi}(\xi)|=(4/\epsilon)^{1/\a}\sup_{\xi\in\ri_0} |\xi|^{1/\a}(1+\log|\xi|)^{\frac{1+\epsilon}{\a}}|\wh{\psi}(\xi)|,
\end{equation}
where the last equality results from (\ref{eq:vp}) and (\ref{eq:psihat}). Then, almost surely  for all $(j,k)\in\Z_+\times\Z$, $l\in\{0,1\}$ and $n\in\N$,
\begin{equation}
\label{lem:bou-lamjk:eq2}
|\la_{l,n}^{j,k}|\le \mu_{\a,\ep}(1+j)^{\frac{1+\epsilon}{\a}}\beta_n^j\,.
\end{equation}
\end{lemma}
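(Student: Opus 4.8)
The plan is to prove Lemma~\ref{lem:bou-lamjk} directly from the definitions~(\ref{def:riparts:eq0bis}) of $\la_{l,n}^{j,k}$, using only pointwise estimates. Fix $(j,k)\in\Z_+\times\Z$, $l\in\{0,1\}$ and $n\in\N$. Since taking real or imaginary part of a complex number, as well as multiplying by the unimodular factor $e^{ik2^{-j}\z_n}$, can only decrease the modulus, I would first observe that
\[
|\la_{l,n}^{j,k}|\le \big|\varphi(\z_n)^{-1/\a}\,2^{-j/\a}\,e^{ik2^{-j}\z_n}\,\wh{\psi}(-2^{-j}\z_n)\big|
= 2^{-j/\a}\,\varphi(\z_n)^{-1/\a}\,\big|\wh{\psi}(-2^{-j}\z_n)\big|.
\]
This bound does not depend on $l$ or on $k$, which is exactly why the lemma is stated uniformly in those indices.

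Next I would insert the support restriction coming from the Meyer wavelet. By~(\ref{eq:psihat}), $\wh{\psi}(-2^{-j}\z_n)=0$ unless $-2^{-j}\z_n\in\ri_0$, equivalently $2^{-j}\z_n\in\ri_0$ by symmetry of $\ri_0$; so the right-hand side above equals $2^{-j/\a}\,\varphi(\z_n)^{-1/\a}\,|\wh{\psi}(-2^{-j}\z_n)|\,\beta_n^j$ with $\beta_n^j=\I_{\ri_0}(2^{-j}\z_n)$ as in~(\ref{def:ber-bin:eq1}). On the event $\{\beta_n^j=1\}$ I have $2^{-j}\z_n\in\ri_0$, so I may write $\varphi(\z_n)^{-1/\a}|\wh{\psi}(-2^{-j}\z_n)|$ in terms of the rescaled variable $\eta:=2^{-j}\z_n\in\ri_0$. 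Using the explicit density~(\ref{eq:vp}), $\varphi(\z_n)=4^{-1}\ep\,|2^j\eta|^{-1}(1+|\log|2^j\eta||)^{-1-\ep}$, hence
\[
\varphi(\z_n)^{-1/\a}=(4/\ep)^{1/\a}\,2^{j/\a}\,|\eta|^{1/\a}\,\big(1+\big|\log|2^j\eta|\big|\big)^{(1+\ep)/\a}.
\]
The factor $2^{j/\a}$ here cancels the $2^{-j/\a}$ from the prefactor, which is the key algebraic simplification.

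It remains to bound the logarithmic factor. Since $\eta\in\ri_0$ I have $2\pi/3\le|\eta|\le 8\pi/3$, so $|2^j\eta|\ge 2^j\cdot(2\pi/3)\ge 1$ for $j\ge0$, and therefore $\big|\log|2^j\eta|\big|=\log|2^j\eta|=j\log 2+\log|\eta|$. A crude bound gives $1+\log|2^j\eta|\le 1+j\log 2+\log(8\pi/3)\le c_0(1+j)$ for a suitable absolute constant $c_0$, whence $\big(1+\big|\log|2^j\eta|\big|\big)^{(1+\ep)/\a}\le c_0^{(1+\ep)/\a}(1+j)^{(1+\ep)/\a}$. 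Collecting everything, on $\{\beta_n^j=1\}$,
\[
|\la_{l,n}^{j,k}|\le (4/\ep)^{1/\a}\,c_0^{(1+\ep)/\a}\,(1+j)^{(1+\ep)/\a}\,|\eta|^{1/\a}\,\big|\wh{\psi}(-\eta)\big|
\le \mu_{\a,\ep}\,(1+j)^{(1+\ep)/\a},
\]
after absorbing $c_0^{(1+\ep)/\a}$ into the constant and recognizing that $\sup_{\eta\in\ri_0}|\eta|^{1/\a}(1+\log|\eta|)^{(1+\ep)/\a}|\wh\psi(-\eta)|$ is precisely $\mu_{\a,\ep}$ as defined in~(\ref{lem:bou-lamjk:eq1}) (using that $\ri_0$ is symmetric, so $\wh\psi(-\eta)$ and $\wh\psi(\eta)$ range over the same values, and that $1+\log|\eta|\ge1$ on $\ri_0$ so the factor is harmless). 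On the complementary event $\{\beta_n^j=0\}$ the left side vanishes and the inequality is trivial. Since the bound is deterministic given $\z_n$ and the exceptional null set is empty, this holds almost surely for all $(j,k,l,n)$ simultaneously. The only mildly delicate point is keeping careful track of the constant $c_0$ and confirming it can be chosen independently of $j$; everything else is a routine pointwise estimate, so I do not expect any real obstacle here --- the lemma is essentially bookkeeping that isolates the growth rate $(1+j)^{(1+\ep)/\a}$ for later use.
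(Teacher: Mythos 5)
Your argument follows essentially the same route as the paper: bound $|\la_{l,n}^{j,k}|$ by the modulus of the complex coefficient, use the support condition (\ref{eq:psihat}) and the symmetry of $\ri_0$ to insert the Bernoulli factor $\beta_n^j$, and then use the explicit density (\ref{eq:vp}) so that $2^{-j/\a}$ cancels and only a logarithmic factor remains to be split into a $(1+j)^{(1+\ep)/\a}$ part and a part absorbed into the constant. The one place where your write-up does not actually deliver the stated inequality is the last step: after bounding $1+\log|2^j\eta|\le c_0(1+j)$ with $c_0\ge 1+\log(8\pi/3)$, you have discarded the weight $(1+\log|\eta|)^{(1+\ep)/\a}$ that is built into the definition (\ref{lem:bou-lamjk:eq1}) of $\mu_{\a,\ep}$, and replaced it by the larger constant $c_0^{(1+\ep)/\a}$; since $1+\log|\eta|$ can be as small as $1+\log(2\pi/3)<c_0$ on $\ri_0$, the claimed inequality $(4/\ep)^{1/\a}c_0^{(1+\ep)/\a}|\eta|^{1/\a}|\wh{\psi}(-\eta)|\le\mu_{\a,\ep}$ is not justified, so you only obtain (\ref{lem:bou-lamjk:eq2}) with $\mu_{\a,\ep}$ replaced by $c_0^{(1+\ep)/\a}\mu_{\a,\ep}$ (harmless for everything the lemma is used for later, but not literally the lemma). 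The clean fix, which is what the paper does, is to write $1+\big|\log|\z_n|\big|\le 1+j+\big|\log|2^{-j}\z_n|\big|\le (1+j)\big(1+\big|\log|2^{-j}\z_n|\big|\big)$, keeping the factor $\big(1+\big|\log|2^{-j}\z_n|\big|\big)^{(1+\ep)/\a}$ so that, on the support of $\wh{\psi}(\mp 2^{-j}\z_n)$, the remaining expression is exactly the quantity whose supremum over $\ri_0$ defines $\mu_{\a,\ep}$; with that adjustment your proof coincides with the paper's.
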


\begin{proof} \ It follows from (\ref{def:riparts:eq0bis}), (\ref{eq:vp}), the triangle inequality, (\ref{eq:psihat}), (\ref{def:ber-bin:eq1}) 
and (\ref{lem:bou-lamjk:eq1}) that almost surely, for all $(j,k)\in\Z_+\times\Z$, $l\in\{0,1\}$ and $n\in\N$, 
\begin{eqnarray*}
|\la_{l,n}^{j,k}| &\le &2^{-j/\a} \varphi (\z_n)^{-1/\a}\big|\widehat{\psi}(2^{-j}\z_n)\big|\\
&\le &  (4/\epsilon)^{1/\a}|2^{-j}\z_n|^{1/\a}\Big(1+j+\big|
\log|2^{-j}\z_n|\big|\Big)^{\frac{1+\epsilon}{\a}}|\widehat{\psi}(2^{-j}\z_n)|\\
&\le & (4/\epsilon)^{1/\a}|2^{-j}\z_n|^{1/\a}\Big(1+\big|
\log|2^{-j}\z_n|\big|\Big)^{\frac{1+\epsilon}{\a}}|\widehat{\psi}(2^{-j}\z_n)|(1+j)^{\frac{1+\epsilon}{\a}}\\
&\le & \mu_{\a,\ep}(1+j)^{\frac{1+\epsilon}{\a}}\beta_n^j\,,
\end{eqnarray*}
which shows that (\ref{lem:bou-lamjk:eq2})  holds.
\end{proof}

The following lemma provides the first upper bound for $|S_{l,m}^{j,k}|$.
\begin{lemma}
\label{lem:bou1-Smjk}
There exists a positive finite random variable $C'$ such that
\begin{equation}
\label{lem:bou1-Smjk:eq1}
|S_{l,m}^{j,k}|\le C' (1+j)^{\frac{1+\epsilon}{\a}}B_m^j \sqrt{\log(1+m)}\,
\end{equation}
almost surely for all $(j,k)\in\Z_+\times\Z$, $l\in\{0,1\}$ and $m\in\N$.
\end{lemma}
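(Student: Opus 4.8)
The plan is to bound $|S_{l,m}^{j,k}|$ by combining the deterministic coefficient bound from Lemma \ref{lem:bou-lamjk} with a Gaussian maximal inequality applied to the partial sums. First I would fix $(j,k)\in\Z_+\times\Z$ and $l\in\{0,1\}$, and observe that, conditionally on the sequence $(\z_n)_{n\in\N}$ (hence on the $\la_{l,n}^{j,k}$ and the $\beta_n^j$), the partial sums $S_{l,m}^{j,k}=\sum_{n=1}^m \la_{l,n}^{j,k} g_{l,n}$ form a Gaussian sequence, since $(g_{l,n})_{n\in\N}$ are i.i.d.\ centered Gaussian and independent of $(\z_m)_{m\in\N}$. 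The conditional variance of $S_{l,m}^{j,k}$ is $\sigma^2\sum_{n=1}^m (\la_{l,n}^{j,k})^2$, where $\sigma^2=\E(g_{l,n}^2)$ is a fixed finite constant; by Lemma \ref{lem:bou-lamjk} this is at most $\sigma^2\mu_{\a,\ep}^2 (1+j)^{2(1+\ep)/\a}\sum_{n=1}^m \beta_n^j = \sigma^2\mu_{\a,\ep}^2(1+j)^{2(1+\ep)/\a} B_m^j$, using that $(\beta_n^j)^2=\beta_n^j$.

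Next I would invoke a standard Gaussian tail/maximal estimate: for a centered Gaussian random variable $Z$ with variance at most $v$, one has $\P(|Z|>x)\le 2\exp(-x^2/(2v))$. Applying this to $Z=S_{l,m}^{j,k}$ with $v=\sigma^2\mu_{\a,\ep}^2(1+j)^{2(1+\ep)/\a} B_m^j$ (and noting that on the event $B_m^j=0$ we simply have $S_{l,m}^{j,k}=0$ and the bound is trivial), I would choose $x$ of the order $(1+j)^{(1+\ep)/\a}\sqrt{B_m^j}\,\sqrt{\log(1+m)+\log(1+j)+\text{(something summing in }k\text{)}}$ so that the resulting probabilities, after multiplying by the number of indices of each size, form a convergent series. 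A Borel--Cantelli argument over all $(j,k,l,m)$ then yields a single almost sure event on which $|S_{l,m}^{j,k}|\le C'(1+j)^{(1+\ep)/\a}\sqrt{B_m^j}\sqrt{\log(1+m)}$ for all indices, with the $\log(1+j)$ and the $k$-dependent logarithmic terms absorbed into the stated bound. Since $B_m^j\le m$ trivially, one has $\sqrt{B_m^j}\le B_m^j$ whenever $B_m^j\ge 1$ (and the case $B_m^j=0$ is trivial), so $\sqrt{B_m^j}$ can be replaced by $B_m^j$ to reach exactly \eqref{lem:bou1-Smjk:eq1}; alternatively one keeps $\sqrt{B_m^j}$ and notes the paper only needs the weaker form with $B_m^j$.

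The main obstacle I anticipate is the uniformity over $k\in\Z$: for fixed $j$ there are infinitely many $k$, so a naive union bound over $k$ diverges. The resolution is that the coefficient bound in Lemma \ref{lem:bou-lamjk}, $|\la_{l,n}^{j,k}|\le \mu_{\a,\ep}(1+j)^{(1+\ep)/\a}\beta_n^j$, is \emph{independent of $k$}. Hence the conditional variance bound $v\le \sigma^2\mu_{\a,\ep}^2(1+j)^{2(1+\ep)/\a}B_m^j$ holds uniformly in $k$, and so does the resulting Gaussian tail estimate for each fixed realization of $(\z_n)$. This means one does not need to union-bound over $k$ at all: it suffices to establish the inequality on a single event (depending on $(\Gamma_m)$, $(\z_m)$, $(g_m)$ but not on $(j,k)$) by summing the tail probabilities over $j$, $l\in\{0,1\}$ and $m\in\N$ only, with $x$ growing like $(1+j)^{(1+\ep)/\a}B_m^j\sqrt{\log(1+m)}$ and the polynomial factors in $j$ and $m$ handled by the exponential decay. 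A second, minor point to get right is that the Gaussian vector $(g_{l,n})_n$ must be handled after conditioning on $(\z_m)_m$, using the independence in Proposition \ref{prop:lepage}; once conditioned, everything is a routine Gaussian computation, and the $C'$ produced is indeed a finite random variable since the almost sure event has full probability.
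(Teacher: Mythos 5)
There is a genuine gap at exactly the point you flag as the ``main obstacle'': the uniformity in $k$. Your conditional variance bound $\sigma_l^2\sum_{n\le m}\big(\la_{l,n}^{j,k}\big)^2\le \sigma_l^2\mu_{\a,\ep}^2(1+j)^{2(1+\ep)/\a}B_m^j$ is indeed independent of $k$, but this only makes the \emph{tail estimate} $\P\big(|S_{l,m}^{j,k}|>x\big)\le 2e^{-x^2/(2v)}$ uniform in $k$; it does not make the underlying events independent of $k$. For different $k$ the random variables $S_{l,m}^{j,k}$ are genuinely different (the phases $e^{ik2^{-j}\z_n}$ enter $\la_{l,n}^{j,k}$), so to get an inequality valid simultaneously for all $(j,k,l,m)$ by Borel--Cantelli you must sum the tail probabilities over $k\in\Z$ as well, and with a $k$-free threshold that sum diverges. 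Summing over $j$, $l$, $m$ only, as you propose, gives for each fixed $k$ a full-probability event and a finite random constant, but that constant depends on $k$ and its supremum over $k\in\Z$ need not be finite. Nor can you repair this by inserting a $\sqrt{\log(3+|k|)}$ term in the threshold and ``absorbing'' it afterwards: the claimed bound $C'(1+j)^{(1+\ep)/\a}B_m^j\sqrt{\log(1+m)}$ contains no $k$, and $\sqrt{B_m^j\,\log(3+|k|)}$ is not $O\big(B_m^j\sqrt{\log(1+m)}\big)$ when $|k|$ is large while $B_m^j$ stays bounded. This is precisely why the paper proves the conditional-Gaussian estimate as a separate statement, Lemma \ref{lem:bou2-Smjk}, where the price of the union bound over $k\in\Z$ is the factor $\sqrt{\log(3+j+|k|+m)}$ appearing explicitly in the conclusion.

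The paper's proof of the present lemma is far more elementary and entirely pathwise, which is what makes the $k$-uniformity automatic. By the triangle inequality and Lemma \ref{lem:bou-lamjk}, $|S_{l,m}^{j,k}|\le \mu_{\a,\ep}(1+j)^{(1+\ep)/\a}\sum_{n=1}^m\beta_n^j\,|g_{l,n}|$; then Remark \ref{rem:gau} (derived from Lemma 1 in \cite{AT03}) provides a single almost surely finite random variable $C$ with $|g_{l,n}|\le C\sqrt{\log(1+n)}$ for all $n$, whence $|S_{l,m}^{j,k}|\le C\mu_{\a,\ep}(1+j)^{(1+\ep)/\a}B_m^j\sqrt{\log(1+m)}$ simultaneously for all $(j,k)\in\Z_+\times\Z$, $l\in\{0,1\}$ and $m\in\N$, with no probability estimate indexed by $k$ needed at all. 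The crude factor $B_m^j$ (rather than your $\sqrt{B_m^j}$) is exactly the cost of this simple argument; the sharper $\sqrt{B_m^j}$ bound is what Lemma \ref{lem:bou2-Smjk} delivers, and it necessarily carries the $k$-dependent logarithm. To fix your route you would either have to keep that $k$-dependent factor (and then you have proved Lemma \ref{lem:bou2-Smjk}, not this lemma) or abandon the Borel--Cantelli step in favor of the pathwise bound just described.
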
  

\begin{proof}\ The lemma is a straightforward consequence of (\ref{def:Smjk:eq1}), the triangle inequality, 
Lemma \ref{lem:bou-lamjk}, Definition \ref{def:ber-bin}, (\ref{def:riparts:eq0}) and the following remark which can easily be derived from 
Lemma 1 in \cite{AT03}.
\end{proof} 

\begin{remark}
\label{rem:gau}
Let $(\wt{g}_m)_{m\in\N}$ be an arbitrary sequence of real-valued centered, identically distributed Gaussian random variables 
(which are not necessarily independent). Then, there is a positive finite random variable $C$ such that  almost surely,  
\begin{equation}
\label{rem:gau:eq1}
|\wt{g}_m|\le C \sqrt{\log(1+m)}\,, \quad\mbox{for all $m\in\N$.}
\end{equation}
\end{remark}

The following lemma provides the second upper bound for $|S_{l,m}^{j,k}|$.
\begin{lemma}
\label{lem:bou2-Smjk}
There exists a positive finite random variable $C''$ such that
\begin{eqnarray}
\label{lem:bou2-Smjk:eq1}
|S_{l,m}^{j,k}| &\le & C'' \sqrt{\bigg (\sum_{n=1}^m \Big |\la_{l,n}^{j,k}\Big |^2\bigg)\log \Big (3+j+|k|+m\Big)}\nonumber\\
&\le & C'' \mu_{\a,\ep}(1+j)^{\frac{1+\epsilon}{\a}} \sqrt{B_m^j\log\big(3+j+|k|+m\big)}
\end{eqnarray}
almost surely for all $l\in\{0,1\}$, $(j,k)\in\Z_+\times\Z$ and $m\in\N$.
\end{lemma}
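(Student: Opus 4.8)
The plan is to prove Lemma \ref{lem:bou2-Smjk} by combining a general Gaussian maximal inequality with the variance bound supplied by Lemma \ref{lem:bou-lamjk}. Fix $l\in\{0,1\}$ and observe that for fixed coefficients $\la_{l,n}^{j,k}$, the random variable $S_{l,m}^{j,k}=\sum_{n=1}^m \la_{l,n}^{j,k}g_{l,n}$ is, conditionally on $(\z_n)_{n\in\N}$, a centered Gaussian with variance proportional to $\sigma_m^2:=\sum_{n=1}^m |\la_{l,n}^{j,k}|^2$ (recall $(g_{l,n})_{n\in\N}$ are i.i.d.\ centered Gaussian and independent of $(\z_n)$). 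The first inequality in (\ref{lem:bou2-Smjk:eq1}) should then follow from a standard subgaussian union bound over the countable index set $(j,k,m)\in\Z_+\times\Z\times\N$: since $\P(|S_{l,m}^{j,k}|>x\sigma_m)\le 2e^{-cx^2}$, choosing the threshold $x\asymp\sqrt{\log(3+j+|k|+m)}$ makes the probabilities summable over $(j,k,m)$, so Borel--Cantelli yields a single almost-sure finite random variable $C''$ with $|S_{l,m}^{j,k}|\le C''\,\sigma_m\sqrt{\log(3+j+|k|+m)}$ for all indices simultaneously. This is essentially the content of Lemma 1 in \cite{AT03} invoked in Remark \ref{rem:gau}, applied now with the logarithmic factor depending jointly on $(j,k,m)$ rather than on $m$ alone.

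For the second inequality in (\ref{lem:bou2-Smjk:eq1}), I would bound $\sigma_m^2$ using Lemma \ref{lem:bou-lamjk}: since $|\la_{l,n}^{j,k}|\le\mu_{\a,\ep}(1+j)^{(1+\ep)/\a}\beta_n^j$ and $\beta_n^j\in\{0,1\}$ so that $(\beta_n^j)^2=\beta_n^j$, we get
\[
\sigma_m^2=\sum_{n=1}^m|\la_{l,n}^{j,k}|^2\le \mu_{\a,\ep}^2(1+j)^{2(1+\ep)/\a}\sum_{n=1}^m\beta_n^j=\mu_{\a,\ep}^2(1+j)^{2(1+\ep)/\a}B_m^j,
\]
using Definition \ref{def:ber-bin:eq2}. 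Taking square roots and substituting into the first inequality gives the second line, with the same $C''$. One subtlety is the interplay between the two sources of randomness: the coefficients $\la_{l,n}^{j,k}$ are themselves random (through $\z_n$), so the cleanest route is to first apply the Gaussian concentration conditionally on the $\sigma$-algebra generated by $(\z_n)_{n\in\N}$ and only afterwards invoke the deterministic (given the $\z_n$) bound on $\sigma_m$; this avoids any measurability issue and keeps $C''$ independent of $(j,k,m)$.

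The main obstacle I anticipate is verifying that the union bound over the three-dimensional index set $\Z_+\times\Z\times\N$ is genuinely summable with the specific logarithmic weight $\log(3+j+|k|+m)$ appearing in the statement, rather than with a larger power such as $(\log(3+j+|k|+m))^{1+\eta}$. Since the tail of a standard Gaussian at level $x$ is $e^{-x^2/2}$, choosing $x^2=C_0\log(3+j+|k|+m)$ produces a summand of order $(3+j+|k|+m)^{-C_0/2}$, and summing this over $(j,k,m)\in\Z_+\times\Z\times\N$ converges once $C_0>6$ (three free indices, one of them two-sided). So the correct normalization is $x=\sqrt{C_0\log(3+j+|k|+m)}$ with $C_0$ a fixed large enough constant, and $C''$ absorbs $\sqrt{C_0}$ together with the constant from the Gaussian tail and the variance-normalization of the $g_{l,n}$; I would make this bookkeeping explicit but routine. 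Everything else—the reduction to a conditional Gaussian, the variance estimate via Lemma \ref{lem:bou-lamjk}, and the collapse of the sum over $n$ of $\beta_n^j$ to $B_m^j$—is mechanical once this summability check is in place.
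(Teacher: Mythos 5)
Your proposal follows essentially the same route as the paper: condition on the $\sigma$-field generated by $(\z_n)_{n\in\N}$ so that $S_{l,m}^{j,k}$ is a centered Gaussian with standard deviation proportional to $\bigl(\sum_{n=1}^m|\la_{l,n}^{j,k}|^2\bigr)^{1/2}$, apply a Gaussian tail bound at level a fixed large multiple of $\sqrt{\log(3+j+|k|+m)}$ so that the probabilities are summable over the triple index $(j,k,m)\in\Z_+\times\Z\times\N$ (the paper uses the factor $4$, giving exponent $-8$, comfortably beyond your threshold $C_0>6$), conclude by Borel--Cantelli that only finitely many exceptional indices occur and take $C''$ as a supremum, and obtain the second line from Lemma \ref{lem:bou-lamjk} together with $(\beta_n^j)^2=\beta_n^j$. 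The only detail the paper makes explicit that you gloss over is the degenerate case $\sum_{n=1}^m|\la_{l,n}^{j,k}|^2=0$, handled there by the conventions $0^{-1/2}=+\infty$ and $(+\infty)\cdot 0=0$ in the definition of $C''$; this is a minor bookkeeping point, not a gap in the argument.
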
 

\begin{proof} \, First notice that the second inequality in (\ref{lem:bou2-Smjk:eq1}) is a straightforward 
consequence of the first inequality in it and of (\ref{lem:bou-lamjk:eq2}) and (\ref{def:ber-bin:eq2}). Hence, 
we only have to show that the first inequality in (\ref{lem:bou2-Smjk:eq1}) is satisfied.

For all $l\in\{0,1\}$, $(j,k)\in\Z_+\times\Z$ and $m\in\N$, let $\La_{l,m}^{j,k}$ be the event defined as 
\begin{equation}
\label{lem:bou2-Smjk:eq2}
\La_{l,m}^{j,k}:=\left\{\o\in\O,\, \big |S_{l,m}^{j,k}(\o)\big | >4\si_l\,\sqrt{\bigg (\sum_{n=1}^m 
\Big |\la_{l,n}^{j,k}(\o)\Big |^2\bigg)\log \Big (3+j+|k|+m\Big)}\,\right\},
\end{equation} 
where $\si_l>0$ denotes the common value of the standard deviations of the centered independent real-valued Gaussian random variables $g_{l,n}$, $n\in\N$. For proving the first inequality in (\ref{lem:bou2-Smjk:eq1}), it is enough to show that for $l\in\{0,1\}$, 
\begin{equation}
\label{lem:bou2-Smjk:eq3}
\sum_{j=0}^{+\infty}\sum_{k=-\infty}^{+\infty}\sum_{m=1}^{+\infty} \P\big (\La_{l,m}^{j,k}\big)<+\infty.
\end{equation}
Indeed, (\ref{lem:bou2-Smjk:eq3}) means that 
\[
\E\Big (\sum_{j=0}^{+\infty}\sum_{k=-\infty}^{+\infty}\sum_{m=1}^{+\infty} \I_{\La_{l,m}^{j,k}}\Big)<+\infty
\]
and consequently that 
\[
\sum_{j=0}^{+\infty}\sum_{k=-\infty}^{+\infty}\sum_{m=1}^{+\infty} \I_{\La_{l,m}^{j,k}}<+\infty,\quad\mbox{almost surely.}
\]
Thus, for $l\in\{0,1\}$, the random set of indices $\big\{(j,k,m)\in\Z_+\times\Z\times\N\,:\, \I_{\La_{l,m}^{j,k}}=1\big\}$ is 
almost surely finite. The latter fact implies that the positive random variable $C''$ defined as
\begin{equation}
\label{lem:bou2-Smjk:eq3bis}
C'':=\sup_{(l,j,k,m)\in\{0,1\}\times\Z_+\times\Z\times\N}\left\{4\si_l+\Bigg(\bigg (\si_l^2\sum_{n=1}^m \big |\la_{l,n}^{j,k}\big |^2\bigg)
\log \Big (3+j+|k|+m\Big)\Bigg)^{-1/2}\big |S_{l,m}^{j,k}\big |\I_{\La_{l,m}^{j,k}}\right\},
\end{equation}
with the conventions that $0^{-1/2}=+\infty$ and $(+\infty)0=0$, is almost surely finite. Moreover, it can easily be
 seen that the first inequality in (\ref{lem:bou2-Smjk:eq1}) holds when $C''$ is defined through (\ref{lem:bou2-Smjk:eq3bis}). 

Now we proceed with the proof of (\ref{lem:bou2-Smjk:eq3}).
Denote by $\E_{\z}(\cdot)$ the conditional expectation operator with respect to $\F_\z$, the $\si$-field spanned by 
the sequence of the random variables $(\z_m)_{m\in\N}$. It is clear that, for all $l\in\{0,1\}$, $(j,k)\in\Z_+\times\Z$ and $m\in\N$, 
\begin{equation}
\label{lem:bou2-Smjk:eq4}
\P\big (\La_{l,m}^{j,k}\big)=\E\Big(\I_{\La_{l,m}^{j,k}}\Big)=\E\Big (\E_{\z}\Big(\I_{\La_{l,m}^{j,k}}\Big)\Big).
\end{equation}
Then, by using the fact that the conditional distribution of $S_{l,m}^{j,k}$ with respect to $\F_\z$ is a centered Gaussian 
distribution with standard deviation equals
\[
\si_l\,\sqrt{\sum_{n=1}^m \Big |\la_{l,n}^{j,k}\Big |^2}
\]
and the fact that $4\,\sqrt{\log (3+j+|k|+m)}\ge 1$, we obtain  that almost surely
\[
\E_{\z}\Big(\I_{\La_{l,m}^{j,k}}\Big)\le \exp\bigg (-2^{-1}\Big(4\,\sqrt{\log (3+j+|k|+m)}\Big)^2\bigg)=\big (3+j+|k|+m\big)^{-8}.
\] 
Thus, it follows from (\ref{lem:bou2-Smjk:eq4}) that 
\[
\P\big (\La_{l,m}^{j,k}\big)\le (3+j+|k|+m\big)^{-8},
\]
which implies (\ref{lem:bou2-Smjk:eq3}).
\end{proof} 

\begin{remark}
\label{rem:ga}
It results from (\ref{eq:ga}) and the strong law of large number that $m^{-1} \Ga_m\xrightarrow[m\rightarrow+\infty]{a.s.} 1$, which  
entails that there exist two positive finite random variables $C'<C''$ such that almost surely, 
\begin{equation}
\label{rem:ga:eq1}
C' m\le \Ga_m\le C'' m\, \ \hbox{ for all $m\in\N$}.
\end{equation}
\end{remark}

\begin{remark}
\label{rem:epo}
Let $(\epo_n)_{n\in\N}$ be an arbitrary sequence of identically distributed exponential random variables (which are not 
necessarily independent). Then, there is a positive finite random variable $C$ such that  almost surely,  
\begin{equation}
\label{rem:epo:eq1}
\epo_n\le C \log(1+n)\, \ \mbox{for all $n\in\N$.}
\end{equation}
\end{remark}
To see this,  denote by $\la>0$ the common value of the parameters of the $\epo_n$'s. Then 
\[
\sum_{n=1}^{+\infty} \P\big (\epo_n>2\la^{-1}\log (n)\big)\le \sum_{n=1}^{+\infty} n^{-2} <+\infty.
\] 
Hence, (\ref{rem:epo:eq1}) follows from the Borel-Cantelli Lemma.

\begin{lemma}
\label{lem:abel}
For $l\in\{0,1\}$ and $(j,k)\in\Z_+\times\Z$, the random series 
\begin{equation}
\label{lem:abel:eq1}
\chi_{j,k}^{l}:=\sum_{m=1}^{+\infty} \big (\Ga_m^{-1/\a}-\Ga_{m+1}^{-1/\a}\big)S_{l,m}^{j,k}
\end{equation}
is almost surely absolutely convergent. Moreover, one has almost surely that
\begin{equation}
\label{lem:abel:eq2}
\Re(\ep_{\a,j,k})=a_\a\big(\chi_{j,k}^{0}-\chi_{j,k}^{1}\big).
\end{equation}
\end{lemma}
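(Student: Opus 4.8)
The plan is to recognize \eqref{def:riparts:eq2} as a series amenable to Abel (summation by parts) rearrangement, and to extract from the work already done the quantitative decay needed to justify the rearrangement. Concretely, write $c_m := \Ga_m^{-1/\a}$ and $a_{l,m} := \la_{l,m}^{j,k}g_{l,m}$, so that $S_{l,m}^{j,k} = \sum_{n=1}^m a_{l,n}$ and $\Re(\ep_{\a,j,k}) = a_\a\sum_{m=1}^\infty c_m (a_{0,m}-a_{1,m})$, the convergence of this series being guaranteed by Definition \ref{def:riparts}. The classical Abel transform gives, for each finite $N$,
\[
\sum_{m=1}^N c_m a_{l,m} = c_{N+1} S_{l,N}^{j,k} + \sum_{m=1}^N (c_m - c_{m+1}) S_{l,m}^{j,k}.
\]
So the lemma reduces to two things: (a) the boundary term $c_{N+1} S_{l,N}^{j,k} = \Ga_{N+1}^{-1/\a} S_{l,N}^{j,k}$ tends to $0$ almost surely as $N\to\infty$, and (b) the series $\sum_m (c_m-c_{m+1}) S_{l,m}^{j,k}$ converges \emph{absolutely} almost surely; granting these, letting $N\to\infty$ identifies $\sum_{m} c_m a_{l,m}$ with $\chi_{j,k}^l$, and \eqref{lem:abel:eq2} follows by taking the difference $l=0$ minus $l=1$ and multiplying by $a_\a$.

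For (a), I would use Remark \ref{rem:ga} to get $\Ga_{N+1}^{-1/\a} \le (C' N)^{-1/\a}$ almost surely, and Lemma \ref{lem:bou1-Smjk} (or Lemma \ref{lem:bou2-Smjk}) to bound $|S_{l,N}^{j,k}| \le C'(1+j)^{(1+\ep)/\a} B_N^j \sqrt{\log(1+N)}$; since $B_N^j \le N$, the product is $O\big(N^{1-1/\a}\sqrt{\log(1+N)}\big)$. When $\a\in[1,2)$ with $\a>1$ this tends to $0$; the borderline $\a=1$ needs the sharper bound $B_N^j = o(N)$ a.s. (indeed $B_N^j \sim N\,\P(2^{-j}\z_1\in\ri_0)$ with $\P(2^{-j}\z_1\in\ri_0)<1$, so $B_N^j \le \theta_j N$ eventually for some $\theta_j<1$), giving $\Ga_{N+1}^{-1} S_{l,N}^{j,k} = O(\sqrt{\log N})\cdot O(1)$... which is not obviously $o(1)$. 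The cleaner route for $\a=1$: use Lemma \ref{lem:bou2-Smjk} instead, $|S_{l,N}^{j,k}| \le C''\mu_{\a,\ep}(1+j)^{1+\ep}\sqrt{B_N^j \log(3+j+|k|+N)}$, so $\Ga_{N+1}^{-1}|S_{l,N}^{j,k}| = O\big(N^{-1}\sqrt{N\log N}\big) = O\big(\sqrt{\log N}/\sqrt N\big)\to 0$. This handles all $\a\in[1,2)$ uniformly, and I expect this choice of bound — rather than Lemma \ref{lem:bou1-Smjk} — to be the decisive point.

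For (b), note $c_m - c_{m+1} = \Ga_m^{-1/\a} - \Ga_{m+1}^{-1/\a} \ge 0$ since $\Ga_m$ is increasing, so absolute convergence amounts to convergence of $\sum_m (\Ga_m^{-1/\a}-\Ga_{m+1}^{-1/\a})|S_{l,m}^{j,k}|$. Using the mean value theorem, $\Ga_m^{-1/\a}-\Ga_{m+1}^{-1/\a} \le \a^{-1}\Ga_m^{-1/\a-1}(\Ga_{m+1}-\Ga_m) = \a^{-1}\Ga_m^{-1/\a-1}\epo_{m+1}$; by Remark \ref{rem:ga}, $\Ga_m^{-1/\a-1} \le (C'm)^{-1/\a-1}$, and by Remark \ref{rem:epo}, $\epo_{m+1} \le C\log(2+m)$ a.s. Combining with Lemma \ref{lem:bou2-Smjk}, each term is bounded a.s. by a constant (depending on $j,k,l$ but not $m$) times
\[
m^{-1/\a-1}\log(2+m)\cdot\sqrt{B_m^j\log(3+j+|k|+m)} \;\le\; \text{const}\cdot m^{-1/\a-1/2}\big(\log(3+m)\big)^{3/2},
\]
using $B_m^j\le m$; since $1/\a+1/2 \ge 1$ for $\a\in[1,2)$, this is summable (for $\a=1$ the exponent is exactly $3/2>1$; for $\a<1$... but here $\a\ge1$, so we are safely above $1$). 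Hence the series \eqref{lem:abel:eq1} converges absolutely almost surely, completing the proof. The only subtlety I anticipate is making sure the per-$(j,k)$ random constants are genuinely finite — which they are, being suprema over the a.s.-finite exceptional index sets already controlled in Lemmas \ref{lem:bou1-Smjk} and \ref{lem:bou2-Smjk} and in Remarks \ref{rem:ga}--\ref{rem:epo}.
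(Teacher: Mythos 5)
Your proof is correct and takes essentially the same route as the paper: an Abel summation-by-parts of the partial sums of (\ref{def:riparts:eq2}), with the boundary term sent to $0$ via Lemma \ref{lem:bou2-Smjk}, $B_m^j\le m$, $\Ga_m\ge C'm$ and $1/\a>1/2$, and absolute convergence from the estimate $\Ga_m^{-1/\a}-\Ga_{m+1}^{-1/\a}\le \a^{-1}\Ga_m^{-1-1/\a}\epo_{m+1}=O\big(m^{-1-1/\a}\log(1+m)\big)$ combined with the same bound on $|S_{l,m}^{j,k}|$. One cosmetic remark: what summability requires is the strict inequality $1/\a+1/2>1$, i.e.\ $\a<2$ (so the near-critical case is $\a$ close to $2$, not $\a=1$), which holds throughout, so your conclusion stands.
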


\begin{proof}\ In view of (\ref{lem:bou2-Smjk:eq1}),   the inequality 
\begin{equation}
\label{lem:abel:eq3}
\sqrt{B_m^j\log\big(3+j+|k|+m\big)}\le \Big (\sqrt{\log\big(3+j+|k|\big)}\,\Big)\sqrt{m\log(3+m)}\,,
\end{equation}
and the fact that $\a\in [1,2)$, in order to prove that the random series in (\ref{lem:abel:eq1}) is almost surely absolutely 
convergent, it is enough to show that almost surely 
\begin{equation}
\label{lem:abel:eq4}
\sup_{m\in \N} \,\frac{m^{1+\frac{1}{\a}}}{\log(1+m)}\, \big (\Ga_m^{-1/\a}-\Ga_{m+1}^{-1/\a}\big)<+\infty\,.
\end{equation}
From elementary calculations and (\ref{eq:ga}),  we derive that, for all $m\in\N$,
\begin{eqnarray}
\label{lem:abel:eq5}
\Ga_m^{-1/\a}-\Ga_{m+1}^{-1/\a}=\frac{\big (\Ga_m+\epo_{m+1}\big)^{1/\a}-\Ga_m^{1/\a}}{\Ga_m^{1/\a}\Ga_{m+1}^{1/\a}}
=\frac{1}{\Ga_{m+1}^{1/\a}}\bigg (\Big (1+\frac{\epo_{m+1}}{\Ga_m}\Big)^{1/\a}-1\bigg).
\end{eqnarray}
By using the inequality $(1+x)^{1/\a}-1\le \a^{-1} x$, for all $x\in\R_+$,  we derive that for all $m\in\N$, 
\begin{equation}
\label{lem:abel:eq6}
\Ga_m^{-1/\a}-\Ga_{m+1}^{-1/\a}\le\frac{\epo_{m+1}}{\a \Ga_m \Ga_{m+1}^{1/\a}}.
\end{equation}
Finally, combining (\ref{lem:abel:eq6}) with (\ref{rem:epo:eq1}) and the first inequality in (\ref{rem:ga:eq1}) yields
 (\ref{lem:abel:eq4}).

Now we show (\ref{lem:abel:eq2}). To this end we will use an Abel transform. For any integer $M\ge 2$, 
let $\Ps_M$ be the partial sum of order $M$ of the random series in (\ref{def:riparts:eq2}). That is 
\begin{equation}
\label{lem:abel:eq7}
\Ps_M:=a_{\a}\sum_{m=1}^{M} \Ga_m^{-1/\a}\big (\la_{0,m}^{j,k}g_{0,m}-\la_{1,m}^{j,k}g_{1,m}\big).
\end{equation}
By using the notations in Definition \ref{def:Smjk},  we can write $\Ps_M$ as 
\begin{eqnarray*}
&& \Ps_M = a_{\a}\bigg (\sum_{m=1}^{M} \Ga_m^{-1/\a}\Big (S_{0,m}^{j,k}-S_{0,m-1}^{j,k}\Big)-\sum_{m=1}^{M} \Ga_m^{-1/\a}
\Big (S_{1,m}^{j,k}-S_{1,m-1}^{j,k}\Big)\bigg)\nonumber\\
&& =a_{\a}\bigg (\sum_{m=1}^{M} \Ga_m^{-1/\a}S_{0,m}^{j,k}-\sum_{m=1}^{M-1} \Ga_{m+1}^{-1/\a}S_{0,m}^{j,k}-\sum_{m=1}^{M} 
\Ga_m^{-1/\a}S_{1,m}^{j,k}+\sum_{m=1}^{M-1} \Ga_{m+1}^{-1/\a}S_{1,m}^{j,k}\bigg)\nonumber\\
&&=a_{\a}\bigg (\Ga_M^{-1/\a}S_{0,M}^{j,k}-\Ga_M^{-1/\a}S_{1,M}^{j,k}+\sum_{m=1}^{M-1}\big (\Ga_m^{-1/\a}-\Ga_{m+1}^{-1/\a}\big)
S_{0,m}^{j,k}-\sum_{m=1}^{M-1}\big (\Ga_m^{-1/\a}-\Ga_{m+1}^{-1/\a}\big)S_{1,m}^{j,k}\bigg).
\end{eqnarray*} 
Thus, in view of (\ref{lem:abel:eq1}) and the fact that $\Ps_M$ converges almost surely to $\Re(\ep_{\a,j,k})$ as  $M \to +\infty$, 
we see that, for proving (\ref{lem:abel:eq2}), it is enough to show that, for $l\in\{0,1\}$, 
\begin{equation}
\label{lem:abel:eq8}
\Ga_M^{-1/\a}S_{l,M}^{j,k}\xrightarrow[M\rightarrow+\infty]{a.s.} 0\,.
\end{equation}
Putting together (\ref{lem:bou2-Smjk:eq1}), (\ref{lem:abel:eq3}), the first inequality in (\ref{rem:ga:eq1}) and the fact that $1/\a>1/2$, 
it follows that (\ref{lem:abel:eq8}) is satisfied. This finishes the proof.
\end{proof}

\begin{lemma}
\label{lem:sumBin}
For each $j\in\Z_+$, let
\begin{equation}
\label{lem:sumBin:eq1}
p_j:=\P\Big (\big\{\o\in\O,\,\, 2^{-j}\z_1\in \ri_0\big\}\Big)=\frac{\ep}{2}\int_{2^{j+1}\frac{\pi}{3}}^{2^{j+3}\frac{\pi}{3}}\frac{d\xi}{\xi (1+\log \xi )^{1+\ep}}\,,
\end{equation}
where the second equality follows from the facts that $\ri_0:=\big\{\xi\in\R\,:\, 2\pi/3\le |\xi|\le 8\pi/3\big\}$ and the probability density function of 
$\z_1$ is the even function $\vp$ given by (\ref{eq:vp}).
Then there is an event $\wt{\O}$  of probability $1$ with the following property: for each fixed $\eta \in (1/2,1)$, there exists a 
finite positive random variable $\wt{C}_\eta$ such that on $\wt{\O}$ %$\wt{C}_\eta$
\begin{equation}
\label{lem:sumBin:eq2}
B_m^j\le \wt{C}_\eta\big (p_j m+m^\eta\big),\quad\mbox{for all $(j,m)\in\Z_+\times\N$.}
\end{equation}
\end{lemma}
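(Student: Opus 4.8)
The plan is to obtain the almost sure uniform bound (\ref{lem:sumBin:eq2}) from a Borel--Cantelli argument over the double index $(j,m)\in\Z_+\times\N$. For a large \emph{deterministic} constant $K$, to be fixed at the end, put
\[
E_{j,m}:=\Big\{\o\in\O:\ B_m^j(\o)>K\big(p_j m+m^\eta\big)\Big\}.
\]
I would first prove that $\sum_{j\in\Z_+}\sum_{m\in\N}\P(E_{j,m})<+\infty$ for $K$ large enough; then Borel--Cantelli yields an event of probability $1$ on which only finitely many pairs $(j,m)$ satisfy $\o\in E_{j,m}$, and on that event one may take $\wt C_\eta$ to be the maximum of $K$ and of the finitely many ratios $B_m^j(\o)/(p_jm+m^\eta)$ over those exceptional pairs, which makes (\ref{lem:sumBin:eq2}) hold for \emph{all} $(j,m)$. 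To produce a single event $\wt\O$ valid for every $\eta\in(1/2,1)$, I would carry this out for the countable family $\eta_n:=\tfrac12+\tfrac1n$, $n\ge3$, set $\wt\O:=\bigcap_{n\ge3}\wt\O_{\eta_n}$ (still of probability $1$), and exploit that $m\mapsto m^\eta$ is nondecreasing: given $\eta\in(1/2,1)$ pick $n$ with $\eta_n\le\eta$, so that on $\wt\O$ the bound for $\eta_n$ gives $B_m^j\le\wt C_{\eta_n}(p_jm+m^{\eta_n})\le\wt C_{\eta_n}(p_jm+m^\eta)$, and one takes $\wt C_\eta:=\wt C_{\eta_n}$.

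The crux is the summability of $\P(E_{j,m})$. Since $(\beta_n^j)_{n\in\N}$ are i.i.d.\ Bernoulli with parameter $p_j$, the variable $B_m^j$ is Binomial$(m,p_j)$, and I would use the Chernoff estimate $\P(B_m^j\ge t)\le(e\mu/t)^t$ for $t\ge\mu:=p_jm$, together with the fact that $t\mapsto(e\mu/t)^t$ decreases on $(\mu,+\infty)$. Writing $\tau_{j,m}:=p_jm+m^\eta\ge\mu$ and $t:=K\tau_{j,m}$, this yields simultaneously $\P(E_{j,m})\le(e/K)^{K\mu}$ and, using $t\ge Km^\eta>\mu$, $\P(E_{j,m})\le(ep_jm^{1-\eta}/K)^{Km^\eta}$. \textbf{The main obstacle is that $j$ ranges over an infinite set}, so an estimate depending on $m$ alone cannot be summed in $j$; this forces a split into two regimes. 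If $p_jm\ge m^\eta$, then $\mu\ge m^\eta$, hence $\P(E_{j,m})\le(e/K)^{K\mu}\le e^{-\mu}\le e^{-m^\eta}$ as soon as $K(\log K-1)\ge1$; moreover, for fixed $m$ the number of such $j$ is bounded by a power of $m$ because $p_j$ decays polynomially (see below), so this regime contributes a convergent series of the form $\sum_m m^{a}e^{-m^\eta}$. If $p_jm<m^\eta$, then the base $ep_jm^{1-\eta}/K$ is $<1$ for $K>e$, so extracting two factors from the product bounds $\P(E_{j,m})$ by $(em^{1-\eta}/K)^2\,p_j^{\,2}\,(e/K)^{Km^\eta-2}$, and because $\sum_jp_j^{\,2}<+\infty$ (again see below) this regime contributes a convergent series of the form $\sum_m m^{b}(e/K)^{Km^\eta}$. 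Thus $\sum_{j,m}\P(E_{j,m})<+\infty$ for all $K\ge4$, say.

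Finally, both facts about $p_j$ used above follow from the closed form in (\ref{lem:sumBin:eq1}): substituting $u=\log\xi$ gives $p_j=\tfrac{\ep}{2}\int_{a_j}^{a_j+2\log2}(1+u)^{-1-\ep}\,du$ with $a_j=\log\!\big(2^{j+1}\pi/3\big)$, whence $c(1+j)^{-1-\ep}\le p_j\le C(1+j)^{-1-\ep}$ for some $0<c\le C<+\infty$ and all $j\in\Z_+$; in particular $p_j$ decays polynomially (so $p_jm\ge m^\eta$ forces $(1+j)^{1+\ep}\le Cm^{1-\eta}$, a polynomial count in $m$) and $\sum_jp_j^{\,2}<+\infty$ since $2(1+\ep)>1$. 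With these inputs the two-regime estimate closes, and the Borel--Cantelli step from the first paragraph completes the proof; the only routine matters left are the bookkeeping of the polynomial-in-$m$ prefactors (harmless against $e^{-m^\eta}$ and $(e/K)^{Km^\eta}$) and the explicit choice of $K$.
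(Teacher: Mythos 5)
Your proposal is correct, but it takes a genuinely different route from the paper's. The paper centers the Bernoulli variables and estimates the $2q$-th moment of $\sum_{n=1}^m(\beta_n^j-p_j)$ by a multinomial expansion, the key point being $\big|\E\big[(\beta_n^j-p_j)^a\big]\big|\le p_j$ for $a\ge 2$; Markov's inequality then gives the two-sided bound $\P\big(|B_m^j-p_jm|>m^\eta\big)\le c_1(q)\,m^{-(2\eta-1)q}\,p_j$, so that choosing $(2\eta-1)q>1$ handles the sum over $m$, while the sum over $j$ converges simply because $\sum_j p_j<+\infty$ (the intervals of integration in (\ref{lem:sumBin:eq1}) overlap boundedly), with no case distinction in $(j,m)$ and no rate information on $p_j$ needed. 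You instead use the one-sided Chernoff bound $\P(B_m^j\ge t)\le (e p_j m/t)^t$, which forces your split according to whether $p_jm\ge m^\eta$ or $p_jm<m^\eta$ and requires the explicit decay $p_j\le C(1+j)^{-1-\ep}$ (which the paper only derives later, inside the proof of Proposition \ref{prop:fund}) both to count the admissible $j$ in the first regime and to get $\sum_j p_j^2<+\infty$ in the second; all the estimates you sketch (monotonicity of $t\mapsto(e\mu/t)^t$ beyond the mean, the choice $K\ge 4$, the polynomial-in-$m$ count of $j$, the extraction of two factors of $p_j$) do close correctly. What your approach buys is stretched-exponential decay in $m$ and no combinatorial bookkeeping; what the paper's buys is a bound linear in $p_j$, two-sided concentration, and summability in $j$ using only $\sum_j p_j<+\infty$. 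Your reduction to the countable family $\eta_n=1/2+1/n$ together with $m^{\eta_n}\le m^\eta$ is the same device the paper employs with rational $\eta$, and placing the deterministic constant $K$ inside the event is a harmless variant of the paper's definition of $\wt C_\eta$ as a supremum over the almost surely finite exceptional set.
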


\begin{proof} \ For proving the lemma, it is enough to show that for any fixed $\eta\in(1/2,1)$,  
\begin{equation}
\label{lem:sumBin:eq3} 
\sum_{j=0}^{+\infty}\,\sum_{m=1}^{+\infty}\P\Big (\big |B_m^j-p_j m\big| >m^\eta\Big)<+\infty\,.
\end{equation}
Indeed, (\ref{lem:sumBin:eq3}) implies that 
\begin{equation}
\label{lem:sumBin:eq3ter}
\P\big (\wt{\O}_{\eta}\big)=1,
\end{equation}
where the event 
\begin{equation}
\label{lem:sumBin:eq3quat}
\wt{\O}_{\eta}:=\bigcup_{v=1}^{+\infty}\bigcap_{(j,m)\in\sI(v)}\big\{B_m^j\le p_j m+m^\eta\big\}
\end{equation}
with $\sI(v):=\big\{(j,m)\in\Z_+\times\N\,:\, j+m\ge v\big\}$ for each $v\in\N$. Thus, setting $\wt{\O}:=
\bigcap_{\eta\in \Q\cap (1/2,1)}\wt{\O}_{\eta}$, we can derive from (\ref{lem:sumBin:eq3quat}) and 
(\ref{lem:sumBin:eq3ter}) that,  for any $\eta\in (1/2,1)$,  the positive random variable  
\begin{equation}
\label{lem:sumBin:eq3bis} 
\wt{C}_\eta:=\sup_{(j,m)\in\Z_+\times\N}\Big\{\big ( p_j m+m^\eta\big)^{-1} B_m^j\Big\}
\end{equation}
is finite on the event $\wt{\O}$ of probability $1$. Moreover, it can easily be seen that  
(\ref{lem:sumBin:eq2}) holds on $\wt{\O}$ when $\wt{C}_\eta$ is defined through (\ref{lem:sumBin:eq3bis}).

Now it remains to prove (\ref{lem:sumBin:eq3}). For each $j\in\Z_+$ and $n\in\N$,  
denote by $\wt{\beta}_n^j$ the centered random variable defined as 
\begin{equation}
\label{lem:sumBin:eq4}
\wt{\beta}_n^j:=\beta_n^j -p_j\,,
\end{equation}
where $\beta_n^j$ is the Bernoulli random variable defined in (\ref{def:ber-bin:eq1}). Let $q$ be 
a positive integer which will be chosen later.  It follows from the Markov inequality  that for all $j\in\Z_+$ and $m\in\N$, 
\begin{equation}
\label{lem:sumBin:eq5}
\P\Big (\big |B_m^j-p_j m\big| >m^\eta\Big)\le m^{-2\eta q}\,\E\bigg ( \Big (\sum_{n=1}^m \wt{\beta}_n^j\Big)^{2q}\bigg)\,.
\end{equation}
In order to estimate $\displaystyle\E\bigg ( \Big (\sum_{n=1}^m \wt{\beta}_n^j\Big)^{2q}\bigg)$, we write it as
\begin{equation}
\label{lem:sumBin:eq6}
\E\bigg ( \Big (\sum_{n=1}^m \wt{\beta}_n^j\Big)^{2q}\bigg)=\sum_{1\le n_1,\ldots, n_{2q}\le m}\,\E\bigg (\prod_{p=1}^{2q} \wt{\beta}^j_{n_p}\bigg)\,.
\end{equation}
Notice that each $\displaystyle\prod_{p=1}^{2q} \wt{\beta}^j_{n_p}$ can be expressed, for some $r\in\{1,\ldots,2q\}$ and 
some distinct integers $\nu_1,\ldots,\nu_r$ satisfying to $1\le\nu_1<\ldots<\nu_r\le m$, as 
\begin{equation}
\label{lem:sumBin:eq7}
\prod_{p=1}^{2q} \wt{\beta}^j_{n_p}=\prod_{u=1}^{r} \Big (\wt{\beta}^j_{\nu_u}\Big)^{a_u}\,, 
\end{equation}
where $a_1,\ldots, a_r$ belong to $\{1,\ldots,2q\}$ and satisfy 
\begin{equation}
\label{lem:sumBin:eq8}
\sum_{u=1}^r a_u=2q\,.
\end{equation}
Then, by the independence of the centered random variables $\wt{\beta}^j_{\nu_1},\ldots , \wt{\beta}^j_{\nu_r}$,  we have
\begin{equation}
\label{lem:sumBin:eq9}
\E\bigg (\prod_{p=1}^{2q} \wt{\beta}^j_{n_p}\bigg)=\prod_{u=1}^{r} \E\bigg(\Big (\wt{\beta}^j_{\nu_u}\Big)^{a_u}\bigg)\,,
\end{equation}
which implies that the latter expectation vanishes as soon as $\min\{a_1,\ldots, a_r\}=1$. Thus, we only need 
to consider the case where $\min\{a_1,\ldots, a_r\}\ge 2 $, which implies $r\le q$ because of the equality 
(\ref{lem:sumBin:eq8}). Next notice that for any given $r\in\{1,\ldots, q\}$, distinct integers $1\le\nu_1<\ldots<\nu_r\le m$, 
and arbitrary numbers $a_1,\ldots, a_r$ belonging to $\{2,\ldots, 2q\}$ and satisfying (\ref{lem:sumBin:eq8}), there are exactly 
\[
\binom{2q}{a_1,\ldots, a_r}:=\frac{(2q)!}{a_1!\times\ldots\times a_r!}
\]
tuples $(n_1,\ldots, n_{2q})$ of numbers belonging to $\{1,\ldots,m\}$ for which the equality (\ref{lem:sumBin:eq7}) holds.
Thus, one can derive from (\ref{lem:sumBin:eq6}) and (\ref{lem:sumBin:eq9}) that 
\begin{equation}
\label{lem:sumBin:eq10}
\E\bigg ( \Big (\sum_{n=1}^m \wt{\beta}_n^j\Big)^{2q}\bigg)=\sum_{r=1}^{q}\,\,\sum_{1\le\nu_1<\ldots<\nu_r\le m}\,\,
\sum_{(a_1,\ldots, a_r)\in\A_{2q,r}}\binom{2q}{a_1,\ldots, a_r}\prod_{u=1}^{r} \E\bigg(\Big (\wt{\beta}^j_{\nu_u}\Big)^{a_u}\bigg)\,,
\end{equation}
where, for all $r\in\{1,\ldots, q\}$,
\begin{equation}
\label{lem:sumBin:eq11}
\A_{2q,r}:=\Big\{(a_1,\ldots, a_r)\in\{2,\ldots,2q\}^r,\,\,\,\,\sum_{u=1}^r a_u=2q\Big\}\,.
\end{equation}
Next, we claim 
\begin{equation}
\label{lem:sumBin:eq12}
\bigg|\E\bigg(\Big (\wt{\beta}^j_{n}\Big)^{a}\bigg)\bigg |\le p_j\,,\quad\mbox{for all $(n,j,a)\in\N\times\Z_+\times \{2,\ldots,2q\}$.}
\end{equation}
Indeed, it follows from (\ref{lem:sumBin:eq4}), the facts that $\beta_{n}^j$ is a Bernoulli random variable with parameter equals to 
$p_j$ (defined in (\ref{lem:sumBin:eq1})) and $a\ge 2$ that 
\[
\begin{split}
\bigg|\E\bigg(\Big (\wt{\beta}^j_{n}\Big)^{a}\bigg)\bigg| &=\big |p_j (1-p_j)^a+(1-p_j)(-p_j)^a\big|\\
&\le (1-p_j) \Big ((1-p_j)^{a-1}+p_j^{a-1}\Big) p_j \le p_j\,.
\end{split}
\]
This verifies (\ref{lem:sumBin:eq12}). Let $c_1(q)$ be the finite deterministic constant, only depending on $q$, defined as
\begin{equation}
\label{lem:sumBin:eq13}
c_1 (q):=\sum_{r=1}^{q}\,\,\sum_{(a_1,\ldots, a_r)\in\A_{2q,r}}\binom{2q}{a_1,\ldots, a_r}\,.
\end{equation}
Then, one can derive from (\ref{lem:sumBin:eq10}), (\ref{lem:sumBin:eq12}) and (\ref{lem:sumBin:eq13}) 
that for all $(j,m)\in\Z_+\times\N$,
\begin{equation}
\label{lem:sumBin:eq14}
\E\bigg ( \Big (\sum_{n=1}^m \wt{\beta}_n^j\Big)^{2q}\bigg)\le c_1 (q) m^q\, p_j.
\end{equation}
By combining (\ref{lem:sumBin:eq5}) and (\ref{lem:sumBin:eq14}), we obtain that for all $(j,m)\in\Z_+\times\N$,  
\begin{equation}
\label{lem:sumBin:eq15}
\P\Big (\big |B_m^j-p_j m\big| >m^\eta\Big)\le c_1 (q) m^{-(2\eta -1)q}\,p_j\,.
\end{equation}
Since $2\eta-1>0$, we choose the integer $q$ large enough so that $(2\eta-1)q>1$. Then, 
 (\ref{lem:sumBin:eq3})  follows from   (\ref{lem:sumBin:eq15}) and (\ref{lem:sumBin:eq1}).
\end{proof} 

We are now ready to prove Proposition \ref{prop:fund}.

\noindent{\bf Proof of Proposition \ref{prop:fund}}\, First, it follows from (\ref{lem:abel:eq4}) that there 
is a positive finite random variable $C_1$ such that almost surely, 
\begin{equation}
\label{prop:fund:eq2bis}
0<\Ga_m^{-1/\a}-\Ga_{m+1}^{-1/\a}\le C_1 m^{-1-1/\a}\log (1+m)\,, \quad\mbox{for all $m\in\N$.}
\end{equation}
Since $1/\a>1/2$, we can choose and fix  a constant $\eta_0\in (1/2,1)$ such that 
\begin{equation}
\label{prop:fund:eq3}
1+\frac{1}{\a}-\eta_0>1\,.
\end{equation}
For each $j\in\Z_+$, denote by $\M_j$ and $\overline{\M}_j$ the two nonempty sets of indices $m$ defined as 
\begin{equation}
\label{prop:fund:eq4}
\M_j:=\big\{m\in\N,\,\, p_j m\ge m^{\eta_0}\big\}
\end{equation}
and 
\begin{equation}
\label{prop:fund:eq5}
\overline{\M}_j:=\N\setminus \M_j=\big\{m\in\N,\,\, p_j m < m^{\eta_0}\big\},
\end{equation}
where the probability $p_j\in (0,1)$ is defined through (\ref{lem:sumBin:eq1}). Then, for every $j\in\Z_+$,
\begin{equation}
\label{prop:fund:eq5bis}
\N=\M_j \cup \overline{\M}_j\,, \quad\mbox{(disjoint union)}
\end{equation}
\begin{equation}
\label{prop:fund:eq6}
p_j m+m^{\eta_0}\le 2 p_j m\,,\quad\mbox{for all $m\in\M_j$}
\end{equation}
and 
\begin{equation}
\label{prop:fund:eq7}
p_j m+m^{\eta_0}< 2 m^{\eta_0}\,,\quad\mbox{for all $m\in\overline{\M}_j$.}
\end{equation}
In all the sequel $l\in\{0,1\}$ is arbitrary, and $j\in\Z_+$ and $k\in\Z$ are arbitrary and such that (\ref{prop:fund:eq1}) 
holds. It follows from (\ref{prop:fund:eq2bis}), (\ref{lem:bou1-Smjk:eq1}), (\ref{lem:sumBin:eq2}), (\ref{prop:fund:eq7}) and 
(\ref{prop:fund:eq3}) that almost surely, 
\begin{eqnarray}
\label{prop:fund:eq8}
&& \sum_{m\in\overline{\M}_j} \big (\Ga_m^{-1/\a}-\Ga_{m+1}^{-1/\a}\big)\big |S_{l,m}^{j,k}\big |\nonumber\\
&& \le C_2(1+j)^{\frac{1+\epsilon}{\a}}\sum_{m\in\overline{\M}_j} B_m^j \, m^{-1-1/\a}\big(\log (1+m)\big)^{3/2}\nonumber\\
&& \le C_3 (1+j)^{\frac{1+\epsilon}{\a}}\sum_{m\in\overline{\M}_j} m^{-(1+1/\a-\eta_0)}\big(\log (1+m)\big)^{3/2}\nonumber\\
&& \le C_4 (1+j)^{\frac{1+\epsilon}{\a}}\,,
\end{eqnarray}
where $C_2$ and $C_3$ are two positive finite random variables not depending on $j$ and $k$, and 
\[
C_4:=C_3 \sum_{m=1}^{+\infty} m^{-(1+1/\a-\eta_0)}\big(\log (1+m)\big)^{3/2}<+\infty\,.
\]
On the other hand, by using (\ref{prop:fund:eq2bis}), (\ref{lem:bou2-Smjk:eq1}), the inequality 
\[
\log \big (3+j+|k|+m\big)\le \log \big (3+j+|k|\big) \log (3+m),\quad\mbox{for all $(j,k,m)\in\Z_+\times\Z\times\N$,}
\]
 (\ref{prop:fund:eq1}), (\ref{lem:sumBin:eq2}), (\ref{prop:fund:eq6}), and the inequality $1/2+1/\a>1$, we have almost surely,
\begin{eqnarray}
\label{prop:fund:eq9}
&& \sum_{m\in\M_j} \big (\Ga_m^{-1/\a}-\Ga_{m+1}^{-1/\a}\big)\big |S_{l,m}^{j,k}\big |\nonumber\\
&& \le C_5 (1+j)^{\frac{1+\epsilon}{\a}}\, \sqrt{\log\big(3+j+|k|\big)}\sum_{m\in\M_j}\big (B_m^j\big)^{1/2} m^{-(1+1/\a)}\big (\log(3+m)\big)^{3/2}\nonumber\\
&& \le C_6 (1+j)^{\frac{1+\epsilon}{\a}} \, \sqrt{p_j (1+j)}\sum_{m\in\M_j}m^{-(1/2+1/\a)}\big (\log(3+m)\big)^{3/2}\nonumber\\
&& \le  C_7 (1+j)^{\frac{1+\epsilon}{\a}} \, \sqrt{p_j (1+j)}\,,
\end{eqnarray}
where $C_5$ and $C_6$ are two positive finite random variables not depending on $j$ and $k$, and 
\[
C_7:=C_6 \sum_{m=1}^{+\infty} m^{-(1/2+1/\a)}\big(\log (3+m)\big)^{3/2}<+\infty\,.
\] 
It follows from (\ref{lem:sumBin:eq1}) that
\[
\begin{split}
p_j &\le \frac{\ep\pi}{6} \big (2^{j+3}-2^{j+1}\big)\Big (\frac{2^{j+1}\pi}{3}\Big)^{-1}\bigg (1+\log \Big (\frac{2^{j+1}\pi}{3}\Big)\bigg)^{-1-\ep}\\
&\le 6 (1+j)^{-1-\ep}\,.
\end{split}
\]
Thus,
\begin{equation}
\label{prop:fund:eq10}
\sqrt{p_j (1+j)}\le \sqrt{6}\,,\quad\mbox{for all $j\in\Z_+$.}
\end{equation}
By (\ref{prop:fund:eq9}) and (\ref{prop:fund:eq10}), we have 
\begin{equation}
\label{prop:fund:eq11}
\sum_{m\in\M_j} \big (\Ga_m^{-1/\a}-\Ga_{m+1}^{-1/\a}\big)\big |S_{l,m}^{j,k}\big | \le  \sqrt{6}\, C_7 (1+j)^{\frac{1+\epsilon}{\a}}\,.
\end{equation}
Finally, by combining  (\ref{lem:abel:eq2}), the triangle inequality, (\ref{lem:abel:eq1}), (\ref{prop:fund:eq5bis}),
(\ref{prop:fund:eq8}), and (\ref{prop:fund:eq11}), we obtain almost surely  
\begin{eqnarray*}
&& \big |\Re(\ep_{\a,j,k})\big |\le a_\a \sum_{l=0}^1 \big |\chi_{j,k}^{l}\big|\le a_\a \sum_{l=0}^1\sum_{m\in\N} \big (\Ga_m^{-1/\a}-
\Ga_{m+1}^{-1/\a}\big)\big |S_{l,m}^{j,k}\big |\\
&& \le a_\a \sum_{l=0}^1\Big (\sum_{m\in\M_j} \big (\Ga_m^{-1/\a}-\Ga_{m+1}^{-1/\a}\big)\big |S_{l,m}^{j,k}\big |
+\sum_{m\in\overline{\M}_j} \big (\Ga_m^{-1/\a}-\Ga_{m+1}^{-1/\a}\big)\big |S_{l,m}^{j,k}\big |\Big)\\
&& \le 2 a_\a \big (C_4+\sqrt{6}\, C_7\big) (1+j)^{\frac{1+\epsilon}{\a}}.
\end{eqnarray*}
This proves (\ref{prop:fund:eq2}).
\cqfd

\section{Proofs of Theorems \ref{thm:main1} and \ref{thm:main2}}
\label{sec:p-main}

\noindent {\bf Proof of Theorem \ref{thm:main1}} \ Let $\wt{\rho}$ be a positive constant. % as in the statement 
%of the theorem. 
For every $j\in\Z_+$, the two nonempty sets $\K$ and $\uK$, which forms a partition of $\Z$, are defined as 
\begin{equation}
\label{thm:main1:eq0}
\K:=\big\{k\in\Z,\, |k|\le 2^j (\wt{\rho}+1)\big\}
\end{equation}
and 
\begin{equation}
\label{thm:main1:eq0bis}
\uK:=\big\{k\in\Z,\, |k|> 2^j (\wt{\rho}+1)\big\}.
\end{equation}
It follows from (\ref{eq:rep-hfsm}) that the HFSM $\{X(t), t\in\R\}$ can be expressed, for all $t\in\R$, as 
\begin{equation}
\label{thm:main1:eq1}
X(t)=X^{-}(t)+X_1^{+}(t)+X_2^{+}(t),
\end{equation}
where the process $\{X^{-}(t), t\in\R\}$ is the low-frequency part of the HFSM defined, for every $t\in\R$, as
\begin{equation}
\label{thm:main1:eq2}
X^{-}(t):=\sum_{j=-\infty}^{-1}\sum_{k=-\infty}^{+\infty} 2^{-jH}\Re\big (\ep_{\a,j,k}\big)\big (\Psi_{\a,H}(2^j t-k)-\Psi_{\a,H}(-k)\big),
\end{equation}
while the two processes $\{X_1^{+}(t), t\in\R\}$ and $\{X_2^{+}(t), t\in\R\}$, whose sum gives the high-frequency 
part of the HFSM, are defined, for each $t\in\R$, as
\begin{equation}
\label{thm:main1:eq3}
X_1^{+}(t):=\sum_{j=0}^{+\infty}\sum_{k\in\K} 2^{-jH}\Re\big (\ep_{\a,j,k}\big)\big (\Psi_{\a,H}(2^j t-k)-\Psi_{\a,H}(-k)\big)
\end{equation}
and
\begin{equation}
\label{thm:main1:eq4}
X_2^{+}(t):=\sum_{j=0}^{+\infty}\sum_{k\in\uK} 2^{-jH}\Re\big (\ep_{\a,j,k}\big)\big (\Psi_{\a,H}(2^j t-k)-\Psi_{\a,H}(-k)\big).
\end{equation}
It is known from Proposition 2.15 in \cite{AB17} that $\{X^{-}(t), t\in\R\}$ has almost surely infinitely differentiable paths. 
Thus, in view of (\ref{thm:main1:eq1}), for proving the theorem it is enough to show that, for all $H\in (0,1)$, $\a\in [1,2)$ 
and arbitrarily small $\de>0$, we have  almost surely
\begin{equation}
\label{thm:main1:eq5}
\sup_{-\wt{\rho}\le t'<t''\le \wt{\rho}}\,\frac{\big | X_1^+(t')-X_1^+(t'')\big |}{\big |t'-t'' \big |^{H}\big (\log(1+|t'-t''|^{-1})\big)^{1/\a+\de}}<+\infty
\end{equation}
and
\begin{equation}
\label{thm:main1:eq6}
\sup_{-\wt{\rho}\le t'<t''\le \wt{\rho}}\,\frac{\big | X_2^+(t')-X_2^+(t'')\big |}{\big |t'-t'' \big |^{H}\big (\log(1+|t'-t''|^{-1})\big)^{1/\a+\de}}<+\infty.
\end{equation}
First, we prove (\ref{thm:main1:eq5}). To this end, we apply Proposition \ref{prop:fund} with $\rho=\wt{\rho}+1$. Let $\O^*$ 
be the event of probability $1$  in this proposition, and let $t'$ and $t''$ be two arbitrary real numbers 
such that $-\wt{\rho}\le t'<t''\le \wt{\rho}$. It follows from  (\ref{thm:main1:eq0}) that  (\ref{prop:fund:eq1}) holds for all $(j,k)\in\Z_+\times\Z$ such that 
$k \in \K$. Hence, it results from (\ref{thm:main1:eq3}) and (\ref{prop:fund:eq2}) that on $\O^*$,
\begin{eqnarray}
\label{thm:main1:eq7}
&& \big | X_1^+(t')-X_1^+(t'')\big | \le \sum_{j=0}^{+\infty}\sum_{k\in\K} 2^{-jH}\big |\Re\big (\ep_{\a,j,k}\big)\big |\big |
\Psi_{\a,H}(2^j t'-k)-\Psi_{\a,H}(2^j t''-k)\big |\nonumber\\
&&\le  C_1 \sum_{j=0}^{+\infty}2^{-jH}(1+j)^{1/\a+\de} \sum_{k\in\K} \big |\Psi_{\a,H}(2^j t'-k)-\Psi_{\a,H}(2^j t''-k)\big |\nonumber\\
&&\le  C_1 \sum_{j=0}^{+\infty}2^{-jH}(1+j)^{1/\a+\de} \sum_{k\in\Z} \big |\Psi_{\a,H}(2^j t'-k)-\Psi_{\a,H}(2^j t''-k)\big |,
\end{eqnarray}
where $C_1$ is a positive finite random variable not depending on $t'$ and $t''$. Since the function 
$\Psi_{\a,H}$ belongs the Schwartz class, this function and its derivative $\Psi_{\a,H}'$ satisfy, for some finite constant 
$c_2$ and for all $y\in\R$,
\begin{equation}
\label{thm:main1:eq8}
\big |\Psi_{\a,H}(y)\big |+\big |\Psi_{\a,H}'(y)\big |\le c_2 \big (1+2\wt{\rho}+|y|\big)^{-3}.
\end{equation}
Observe that 
\begin{equation}
\label{thm:main1:eq8bis}
c_3:=\sup_{y\in\R}\sum_{k\in\Z} \big (1+|y-k|\big)^{-3}<+\infty.
\end{equation}
Next, let $j_0$ be the unique nonnegative integer such that 
\begin{equation}
\label{thm:main1:eq9}
2^{-j_0-1}(2\wt{\rho})<|t'-t''|\le 2^{-j_0} (2\wt{\rho}),
\end{equation}
that is 
\begin{equation}
\label{thm:main1:eq10}
j_0:=\left [\frac{\log\big ((2\wt{\rho})|t'-t''|^{-1}\big) }{\log (2)}\right],
\end{equation}
where $[\cdot]$ denotes the integer part function. Using the mean-value Theorem, (\ref{thm:main1:eq8}), (\ref{thm:main1:eq9}) 
and (\ref{thm:main1:eq8bis}), it can be shown, for all $j\in\{0,\ldots,j_0\}$, that 
\begin{eqnarray}
\label{thm:main1:eq11}
\sum_{k\in\Z} \big |\Psi_{\a,H}(2^j t'-k)-\Psi_{\a,H}(2^j t''-k)\big | &\le & c_2 2^j |t'-t''|\sum_{k\in\Z} \big (1+|2^j t'-k|\big)^{-3}\nonumber\\
&\le & c_2 c_3 2^j |t'-t''|.
\end{eqnarray}
It follows from (\ref{thm:main1:eq11}), (\ref{thm:main1:eq10}) and (\ref{thm:main1:eq9}) that
\begin{equation}
\label{thm:main1:eq12}
\begin{split}
& \sum_{j=0}^{j_0}2^{-jH}(1+j)^{1/\a+\de} \sum_{k\in\Z} \big |\Psi_{\a,H}(2^j t'-k)-\Psi_{\a,H}(2^j t''-k)\big |\\
& \le c_2 c_3 |t'-t''| (1+j_0)^{1/\a+\de}\sum_{j=0}^{j_0}  2^{j(1-H)}\\
&\le c_4 \big |t'-t'' \big |^{H}\big (\log(1+|t'-t''|^{-1})\big)^{1/\a+\de},  
\end{split}
\end{equation}
where the positive finite constant $c_4$ does not depend on $j_0$, $t'$ and $t''$. On the other hand, one can derive from the triangle 
inequality, (\ref{thm:main1:eq8}) and (\ref{thm:main1:eq8bis}) that, for every $j\ge j_0+1$,
\[
\sum_{k\in\Z} \big |\Psi_{\a,H}(2^j t'-k)-\Psi_{\a,H}(2^j t''-k)\big |\le 2 c_2 c_3,
\]
and consequently 
\begin{eqnarray}
\label{thm:main1:eq13}
&& \sum_{j=j_0+1}^{+\infty}2^{-jH}(1+j)^{1/\a+\de} \sum_{k\in\Z} \big |\Psi_{\a,H}(2^j t'-k)-\Psi_{\a,H}(2^j t''-k)\big |\nonumber\\
&& \le 2 c_2 c_3 2^{-(j_0+1)H} \sum_{p=0}^{+\infty}2^{-pH}(2+j_0+p)^{1/\a+\de} \nonumber\\
&& \le 2 c_2 c_3 2^{-(j_0+1)H}(2+j_0)^{1/\a+\de}  \sum_{p=0}^{+\infty}2^{-pH}\Big (1+\frac{p}{2+j_0}\Big)^{1/\a+\de} \nonumber\\
&& \le \Big (2 c_2 c_3  \sum_{p=0}^{+\infty}2^{-pH}(1+p)^{1/\a+\de}\Big) 2^{-(j_0+1)H}(2+j_0)^{1/\a+\de} \nonumber\\
&& \le c_5 \big |t'-t'' \big |^{H}\big (\log(1+|t'-t''|^{-1})\big)^{1/\a+\de},
\end{eqnarray}
where the last inequality follows from (\ref{thm:main1:eq9}) and (\ref{thm:main1:eq10}) and where $c_5$ is a positive and
finite constant not depending on $j_0$, $t'$ and $t''$. Putting together (\ref{thm:main1:eq7}), (\ref{thm:main1:eq12}) and
 (\ref{thm:main1:eq13}) yields (\ref{thm:main1:eq5}).

Next we show that (\ref{thm:main1:eq6}) is satisfied. Let $\O^{**}$ be the event of probability $1$ on which (\ref{ineq2f:ep}) 
holds, and let $t'$ and $t''$ be two arbitrary real numbers such that $-\wt{\rho}\le t'<t''\le \wt{\rho}$. It follows from 
(\ref{ineq2f:ep}) and (\ref{thm:main1:eq4}) that on $\O^{**}$,
\begin{eqnarray}
\label{thm:main1:eq14}
&& \big | X_2^+(t')-X_2^+(t'')\big | \le \sum_{j=0}^{+\infty}\sum_{k\in\uK} 2^{-jH}\big |\Re\big (\ep_{\a,j,k}\big)\big |\big |\Psi_{\a,H}(2^j t'-k)-\Psi_{\a,H}(2^j t''-k)\big |\nonumber\\
&&\le  C_6 \sum_{j=0}^{+\infty}2^{-jH}(1+j)^{1/\a+\de} \sum_{k\in\uK} \sqrt{\log \big (3+j+|k|\big)}\,\big |\Psi_{\a,H}(2^j t'-k)-\Psi_{\a,H}(2^j t''-k)\big |\nonumber\\
&&\le  C_7 \sum_{j=0}^{+\infty}2^{-jH}(1+j)^{1/\a+2\de} \sum_{k\in\uK} \sqrt{\log \big (3+|k|\big)}\,\big |\Psi_{\a,H}(2^j t'-k)-\Psi_{\a,H}(2^j t''-k)\big |,\nonumber\\
\end{eqnarray}
where $C_6$ and $C_7$ are two positive finite random variables not depending on $t'$ and $t''$. By the mean-value 
Theorem and (\ref{thm:main1:eq8}), we see that for all $j\in\Z_+$ and $k\in\uK$,
\begin{equation}
\label{thm:main1:eq15}
\begin{split}
\big |\Psi_{\a,H}(2^j t'-k)-\Psi_{\a,H}(2^j t''-k)\big |&= 2^j |t'-t''| \big |\Psi_{\a,H}'(a-k)\big |\\
&\le  c_2 2^j |t'-t''| \big (1+|a-k|\big)^{-3} \\
&\le  c_2 2^j |t'-t''| \big (1+||k|-|a||\big)^{-3}, 
\end{split}
\end{equation}
where $a$ is some real number satisfying $2^j t'< a <2^j t''$ which implies that
\begin{equation}
\label{thm:main1:eq16}
|a|\le 2^j\wt{\rho}.
\end{equation}
Combining (\ref{thm:main1:eq15}) and (\ref{thm:main1:eq16}) with (\ref{thm:main1:eq0bis}),  we obtain that for all $j\in\Z_+$ and $k\in\uK$,  
\begin{equation}
\label{thm:main1:eq17}
\big |\Psi_{\a,H}(2^j t'-k)-\Psi_{\a,H}(2^j t''-k)\big |\le c_2 2^j |t'-t''| \big (1+|k|- 2^j\wt{\rho}\,\big)^{-3}
\end{equation}
Then (\ref{thm:main1:eq0bis}) and (\ref{thm:main1:eq17}) entail, for every $j\in\Z_+$, that
\begin{eqnarray}
\label{thm:main1:eq18}
&& \sum_{k\in\uK} \sqrt{\log \big (3+|k|\big)}\,\big |\Psi_{\a,H}(2^j t'-k)-\Psi_{\a,H}(2^j t''-k)\big |\nonumber\\
&&\le c_2 2^{j+1} |t'-t''| \sum_{k=[2^j (\wt{\rho}+1)]+1}^{+\infty}\sqrt{\log \big (3+k\big)}\big (1+k- 2^j\wt{\rho}\,\big)^{-3}\nonumber\\
&&\le c_2 2^{j+1} |t'-t''|  \sum_{q=0}^{+\infty} \sqrt{\log \big (4+q+2^j (\wt{\rho}+1)\big)}\big (1+q+2^j\big)^{-3}\nonumber\\
&&\le c_2 2^{j+1} |t'-t''| \sqrt{\log \big (3+2^j \wt{\rho}\,\big)}\,\sum_{q=0}^{+\infty} \sqrt{\log \big (4+q+2^j \big)}\,\big (1+q+2^j\big)^{-3}\nonumber\\
&&\le c_8 |t'-t''| \, 2^{j}\sqrt{j+1} \,\sum_{q=0}^{+\infty} (1+q+2^j\big)^{-5/2}\nonumber\\
&&\le c_8 |t'-t''| \, 2^{j}\sqrt{j+1} \,\int_{0}^{+\infty} (x+2^j\big)^{-5/2}\,dx\nonumber\\
&& \le c_8 |t'-t''| \, 2^{-j/2}\sqrt{j+1}, 
\end{eqnarray}
where $c_8$ is a positive finite constant not depending on $j$, $t'$ and $t''$. Next,  it follows from (\ref{thm:main1:eq14}) and 
(\ref{thm:main1:eq18}) that on the event $\O^{**}$ of probability $1$,
\begin{equation}
\label{thm:main1:eq19}
\big | X_2^+(t')-X_2^+(t'')\big |\le C_9  |t'-t''|,
\end{equation}
where the positive finite random variable 
\[
C_9:=c_8 C_7\Big (\sum_{j=0}^{+\infty}2^{-j(H+1/2)}(1+j)^{1/2+1/\a+2\de}\Big)
\]
does not depend on $t'$ and $t''$. Finally, (\ref{thm:main1:eq19}) implies that  (\ref{thm:main1:eq6}) holds.
\cqfd
\bigskip

\noindent {\bf Proof of Theorem \ref{thm:main2}} \  Let $u<v$ be any fixed real numbers. 
For each $j\in\Z_+$, set
\[
\kj:=\big [ 2^{j-1}(u+v)\big].
\]
Then  
\begin{equation}
\label{main2:e4}
\big | 2^{-j}\,\kj-2^{-1}(u+v)\big |<2^{-j}.
\end{equation}
Let $\th$ be an even real-valued function in the Schwartz class $S(\R)$ whose Fourier transform $\wh{\th}$ (which is also an even 
real-valued function) has a compact support such that 
\begin{equation}
\label{main2:e1}
\supp\,\wh{\th}\subseteq \big\{\xi\in\R,\,\,2^{-1}\le |\xi|\le 1\big\}.
\end{equation}
For all $j\in\Z_+$,  let
\begin{equation}
\label{main2:e2}
W_{j}:=2^{j}\int_{\R} \th (2^j t-\kj) X(t)\,dt=\int_{\R}\th (t)\big (X(2^{-j}\,\kj+2^{-j} t)-X(2^{-j}\,\kj)\big)\,dt.
\end{equation}
Notice that the second equality in (\ref{main2:e2}) follows from the change of variable $t'=2^j t-\kj$ and the equality 
$\int_{\R}\th (t)\,dt=\wh{\th}(0)=0$ (see (\ref{main2:e1})). It is known from Proposition 5.1.4 and Remark 5.1.5 in \cite{Boutard} 
that the pathwise Lebesgue integrals in (\ref{main2:e2}) are well-defined and almost surely
\begin{equation}
\label{main2:e3}
W_{j}=\Re\bigg(\int_{\R}\frac{e^{i 2^{-j}\kj\xi}\,\wh{\th}(2^{-j}\xi)}{|\xi|^{H+1/\a}}\, d\wt{M}_\a(\xi)\bigg).
\end{equation}
Observe that (\ref{main2:e1}) and (\ref{main2:e3}) imply that $(W_j)_{j\in\Z_+}$ is a sequence of independent real-valued 
S$\a$S random variables whose scale parameters satisfy, for every $j\in\Z_+$, 
\begin{equation}
\label{main2:e5}
\si (W_j)=c_1 2^{-jH},
\end{equation}
where the positive finite constant $c_1:=\big (\int_{\R} |\eta |^{-\a H-1} \big |\wh{\th}(\eta)\big |^\a\,d\eta\big)^{1/\a}$. 
Let us now show that 
\begin{equation}
\label{main2:e6}
\limsup_{j\rightarrow +\infty}\, 2^{jH} (j+1)^{-1/\a}\, |W_j |=+\infty\quad\mbox{(almost surely).}
\end{equation}
Recall from Chapter 1 of \cite{ST94} that there are two constants $0<c_2<c_3<+\infty$ such that any arbitrary real-valued  
S$\a$S random variable $Z$ with scale parameter $1$ satisfies
\begin{equation}
\label{main2:e7}
c_2 z^{-\a}\le\P\big (|Z|>z)\le c_3 z^{-\a} ,\quad\mbox{for all $z\in [1,+\infty)$.}
\end{equation}
By using the first inequality in (\ref{main2:e7}), (\ref{main2:e5}) and the fact that 
\[
\sum_{j=1}^{+\infty} \frac{1}{(j+1)\log(j+1)}=+\infty ,
\]
we derive that 
\begin{equation}
\label{main2:e7b}
\sum_{j=1}^{+\infty} \P \Big (\big (c_1 2^{-jH}\big)^{-1} (j+1)^{-1/\a} |W_j |>\big (\log (j+1)\big)^{1/\a}\Big)=+\infty.
\end{equation}
Since the random variables $(W_j)_{j\in\Z_+}$ are independent,  (\ref{main2:e6}) follows from \eqref{main2:e7b} and from 
the second part of the Borel-Cantelli Lemma.

Recall from Corollary 4.2 in \cite{AB17} (see also \cite{Boutard}) and the continuity property of paths of 
$\{X(t), t\in\R\}$ that, for any fixed arbitrarily small $\de>0$, there is a positive finite random variable $C_{4,\de}$ 
such that almost surely
\begin{equation}
\label{main2:e9}
\big | X(t)\big|\le C_{4,\de} \big (1+|t|^H\big)\log^{1/\a+\de}\big (3+|t|\big),\quad\mbox{for all $t\in\R$.}
\end{equation}
It follows from (\ref{main2:e4}) and (\ref{main2:e9}) that almost surely for all $j\in\Z_+$, 
\begin{eqnarray*}
&& \int_{\{|t|>2^{j/2}\}}\big |\th (t)\big| \big |X(2^{-j}\,\kj+2^{-j} t)\big |\,dt\\
&& \le C_{4,\de}  \int_{\{|t|>2^{j/2}\}}\big |\th (t)\big|\big (1+|2^{-j}\,\kj+2^{-j} t|^H\big)\log^{1/\a+\de}\big (3+|2^{-j}\,\kj+2^{-j} t|\big)\,dt\\
&& \le C_{4,\de}  \int_{\{|t|>2^{j/2}\}}\big |\th (t)\big|\big (2+(|u|+|v|+|t|)^H\big)\log^{1/\a+\de}\big (4+|u|+|v|+|t|\big)\,dt.
\end{eqnarray*}
Since $\th\in S(\R)$, we have
\begin{equation}
\label{main2:e10}
\lim_{j\rightarrow +\infty}\,2^{jH} (j+1)^{-1/\a}\int_{\{|t|>2^{j/2}\}}\big |\th (t)\big| \big |X(2^{-j}\,\kj+2^{-j} t)\big |\,dt=0 \quad\mbox{(almost surely).}
\end{equation}
Combining (\ref{main2:e2}) with (\ref{main2:e6}) and (\ref{main2:e10}) gives
\begin{equation}
\label{main2:e11}
\limsup_{j\rightarrow +\infty}\, 2^{jH} (j+1)^{-1/\a}\, |\wt{W}_j |=+\infty\quad\mbox{a.s.,}
\end{equation}
where 
\begin{equation}
\label{main2:e12}
\wt{W}_j:=\int_{\{|t|\le 2^{j/2}\}}\th (t)\big (X(2^{-j}\,\kj+2^{-j} t)-X(2^{-j}\,\kj)\big )\,dt.
\end{equation}
 
Let us now introduce the positive random variable $A$ defined as 
\begin{equation}
\label{main2:e12bis}
A:=\sup_{u\le t'<t''\le v}\,\frac{\big | X(t')-X(t'')\big |}{\big |t'-t'' \big |^{H}\big (\log(1+|t'-t''|^{-1})\big)^{1/\a}}.
\end{equation}
Observe that for proving the theorem, it is enough to show that 
\begin{equation}
\label{main2:e8}
A=+\infty \quad\mbox{a.s.}
\end{equation}
Let $j_0$ be a positive fixed integer which is large enough so that  
\begin{equation}
\label{main2:e13}
2^{-j/2}\le 2^{-2} (v-u),\quad\mbox{for all $j\ge j_0$.}
\end{equation}
By (\ref{main2:e4}) and (\ref{main2:e13}), we have  
\begin{equation}
\label{main2:e14}
2^{-j}\,\kj+2^{-j} t\in [u,v],\quad\mbox{for all $j\ge j_0$ and $t\in \big [-2^{j/2}, 2^{j/2}\big]$.}
\end{equation}
Then, it follows from (\ref{main2:e12}), (\ref{main2:e12bis}) and (\ref{main2:e14}) that for all $j\ge j_0$,  
\begin{eqnarray}
\label{main2:e15}
| \wt{W}_j | &\le & A \int_{\R} \big |\th (t)\big| \big |2^{-j} t\big |^{H}\big (\log(1+|2^{-j} t|^{-1})\big)^{1/\a}\,dt\nonumber\\
&\le & A 2^{-jH} \int_{\R} \big |\th (t)\big| |t |^{H}\big (\log(2^{j}+2^{j} | t|^{-1})\big)^{1/\a}\,dt\nonumber\\
&\le & A 2^{-jH}(j+1)^{1/\a} \int_{\R} \big |\th (t)\big| |t |^{H}\big (1+\log(1+| t|^{-1})\big)^{1/\a}\,dt.
\end{eqnarray}
Moreover, the fact that  $\th\in S(\R)$ implies 
\begin{equation}
\label{main2:e16}
\int_{\R} \big |\th (t)\big| |t |^{H}\big (1+\log(1+| t|^{-1})\big)^{1/\a}\,dt<+\infty.
\end{equation}
Finally, putting together (\ref{main2:e11}), (\ref{main2:e15}) and (\ref{main2:e16}) yields (\ref{main2:e8}). 
This finishes the proof of Theorem \ref{thm:main2}. 

\bigskip
\noindent{\bf Acknowledgments}:  The research of A. Ayache is partially supported by the Labex CEMPI (ANR-11-LABX-0007-01), the GDR 3475 (Analyse Multifractale et Autosimilarit\'e), and the Australian Research Council’s Discovery Projects funding scheme (project number DP220101680). The research of Y. Xiao is partially supported by NSF grant DMS-2153846. This work was partially written during A. Ayache’s visit to Michigan State University in June 2023; he is very grateful to this university for its hospitality and financial support.

\bibliographystyle{plain}
\begin{small}

\end{small}

\end{document}